\newcommand{\MM}{\mathcal{M}}
\newcommand{\CC}{\mathcal{C}}
\newcommand{\IB}{\mathbb{B}}
\newcommand{\IR}{\mathbb{R}}
\newcommand{\IC}{\mathbb{C}}
\newcommand{\IZ}{\mathbb{Z}}
\newcommand{\ov}[1]{\overline{#1}}
\DeclareMathOperator{\Klein}{Klein}
\DeclareMathOperator{\thick}{thick}
\DeclareMathOperator{\thin}{thin}
\DeclareMathOperator{\eucl}{eucl}
\DeclareMathOperator{\stan}{stan}
\DeclareMathOperator{\scal}{scal}
\DeclareMathOperator{\hyp}{hyp}
\DeclareMathOperator{\Ric}{Ric}
\DeclareMathOperator{\area}{area}
\DeclareMathOperator{\tr}{tr}
\DeclareMathOperator{\dist}{dist}
\DeclareMathOperator{\diam}{diam}
\DeclareMathOperator{\vol}{vol}
\DeclareMathOperator{\Rm}{Rm}
\newcommand{\td}[1]{\widetilde{#1}}
\newcommand{\cangle}{\widetilde{\sphericalangle}}
\newcommand{\EMPTY}[1]{}
\newtheorem{Theorem}{Theorem}[section]
\newtheorem{Lemma}[Theorem]{Lemma}
\newtheorem{Proposition}[Theorem]{Proposition}
\newtheorem{Definition}[Theorem]{Definition}
\numberwithin{equation}{section}
\title{Long-time analysis of 3 dimensional Ricci flow I}
\author{Richard H Bamler}
\address{Stanford University, Department of Mathematics, 450 Serra Mall, building 380, Stanford, California 94305}
\email{rbamler@stanford.edu}
\date{\today}
\begin{document}
\begin{abstract}
In this paper we analyze the long-time behaviour of $3$ dimensional Ricci flow with surgery.
We prove that under the topological condition that the initial manifold only has non-aspherical or hyperbolic components in its geometric decomposition, there are only finitely many surgeries and the curvature is bounded by $C t^{-1}$ for large $t$.
This answers an open question in Perelman's work, which was made more precise by Lott and Tian, for this class of initial topologies.

More general classes will be discussed in subsequent papers using similar methods.
\end{abstract}

\maketitle
\tableofcontents

\section{Introduction} \label{sec:Introduction}
This is the first of a series of papers in which we analyze the long-time behavior of the Ricci flow with surgery on $3$ dimensional manifolds.
The main result of this paper will be the following theorem, which we will present more precisely at the end of the introduction:

\begin{quote}
\textit{Let $(M,g)$ be a closed $3$ dimensional Riemannian manifold which fulfills the pure topological condition that all components of its geometric decomposition are hyperbolic or non-aspherical. \\
Then there is a long-time existent Ricci flow which has only \emph{finitely} many surgeries and whose initial metric is $g$.
Moreover, the Riemannian curvature in this flow is bounded everywhere by $C t^{-1}$ for large $t$.}
\end{quote}

The Ricci flow with surgery has been used by Perelman to solve the Poincar\'e and Geometrization Conjecture (\cite{PerelmanI}, \cite{PerelmanII}, \cite{PerelmanIII}).
More precisely, given any initial metric on a closed $3$-manifold, Perelman managed to construct a solution for the Ricci flow with surgery on a maximal time interval and showed that the surgery times do not accumulate.
Hence in every finite time interval there are only a finite number of surgery times.
Furthermore, he could prove that if the given manifold is a homotopy sphere (or more generally a connected sum of prime, non-aspherical manifolds), then the Ricci flow goes extinct in finite time.
This implies that the initial manifold is a sphere if it is simply connected and hence establishes the Poincar\'e Conjecture.
On the other hand, if the Ricci flow continues to exist, he could show that the manifold decomposes into a thick part which approaches a hyperbolic metric and an thin part which becomes arbitrarily collapsed on local scales.
Based on this collapsing, it is then possible to show that the thin part can be decomposed into geometric pieces (\cite{ShioyaYamaguchi}, \cite{MorganTian}, \cite{KLcollapse}) and hereby establish the Geometrization Conjecture.

Observe that although the Ricci flow with surgery was used to solve such difficult problems, some of its basic properties are still unknown, because they surprisingly turned out to be irrelevant in the end.
For example, the question remains whether in the long-time existent case there are finitely many surgery times, i.e. whether after some time the flow can be continued by a conventional smooth, nonsingular Ricci flow defined up to time infinity.
Furthermore, it is still unknown whether and in what way the Ricci flow exhibits the the full geometric decomposition of the manifold.
These questions follow naturally from Perelman's work and are partially explicitly raised there.
It has been conjectured Tian and Lott that they can be answered positively.

In \cite{LottTypeIII}, \cite{LottDimRed} and \cite{LottSesum}, Lott and Lott-Sesum could give a description of the long-time behaviour of certain Ricci flows on manifolds which consist of a single component in their geometric decomposition.
However, they needed to make additional curvature and diameter or symmetry assumptions.

In this paper, we only have to impose a topological condition on the initial manifold.
Using the language developed in section \ref{sec:DefRFsurg} our precise result reads:
\begin{Theorem} \label{Thm:main}
Given a surgery model $(M_{\stan}, g_{\stan}, D_{\stan})$, there is a continuous function $\delta : [0, \infty) \to \IR_+$ such that.

Let $\MM$ be a Ricci flow with surgeries with normalized initial conditions and $\delta(t)$-precise cutoff (see section \ref{sec:DefRFsurg} for more details) such that $\MM(0)$ satisfies the following topological condition: 

$\MM(0) \approx M_1 \# \ldots \# M_m$ is a connected sum of closed $3$-manifolds $M_i$.
Each $M_i$ is either spherical, diffeomorphic to $S^2 \times S^1$ or its torus decomposition only consists of hyperbolic pieces (i.e. we can find collections of pairwise disjoint, incompressible, embedded tori $T_{i,1}, \ldots, T_{i,m_i} \subset M_i$ such that the connected components of $M \setminus (T_{i,1} \cup \ldots \cup T_{i,m_i})$ carry complete finite volume hyperbolic metrics).

Then $\MM$ has only finitely many surgeries and there are constants $T, C < \infty$ such that $|{\Rm}| < C t^{-1}$ on $\MM(t)$ for all $t \geq T$.
\end{Theorem}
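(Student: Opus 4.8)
The plan is to run a contradiction/continuity argument driven by Perelman's thick-thin decomposition, upgraded so that the thin part is eventually ruled out entirely by the topological hypothesis. First I would recall that, by Perelman's long-time estimates, for large $t$ the rescaled flow $t^{-1}g(t)$ has curvature controlled off a collapsed region, and the manifold $\MM(t)$ splits into a thick part $\MM_{\thick}(t)$, on which $t^{-1}g(t)$ is close to a complete finite-volume hyperbolic metric, and a thin part $\MM_{\thin}(t)$, which is locally collapsed at scale $\sqrt{t}$ with curvature bounded below. The boundary tori between the two parts are incompressible in $\MM(t)$, and by the collapsing theory (Shioya--Yamaguchi / Morgan--Tian / Kleiner--Lott) the thin part admits a graph/geometric decomposition along incompressible tori. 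Tracking the topology through surgeries — each surgery is performed along a $\delta(t)$-neck and replaces a ball by a copy of the surgery model, so it only affects the connected-sum decomposition and not the torus decomposition of the aspherical summands — one sees that the hyperbolic pieces that appear as thick limits, together with the graph pieces of the thin part, realize a geometric decomposition of (a submanifold of) $\MM(0)$. The topological assumption that each $M_i$ is spherical, an $S^2\times S^1$, or purely hyperbolic in its torus decomposition then forces: the spherical and $S^2\times S^1$ summands must be removed by surgery or extinction in bounded time, and the remaining aspherical part must be covered entirely by the hyperbolic thick pieces, so the thin part must become empty for $t$ large.

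The key steps, in order, would be: (1) establish the canonical neighbourhood and $\kappa$-noncollapsing estimates persist with $t$-independent constants for large $t$ (this is essentially in Perelman, using that $\delta(t)$ is chosen small enough); (2) prove the hyperbolic thick part stabilizes — the number of cusped hyperbolic pieces is eventually constant and they converge smoothly (after rescaling) on compact subsets, with the thick region being a smaller and smaller neighbourhood of the cusps' complement; (3) show the cuspidal tori are incompressible and isotopic in $\MM(t)$ to the interface tori with the thin part (Hamilton's / Perelman's argument via minimal surfaces and the monotonicity of the area under the flow); (4) invoke the collapsing structure theorem to decompose $\MM_{\thin}(t)$ geometrically, obtaining a global geometric decomposition of each aspherical summand; (5) compare with the hypothesized purely-hyperbolic torus decomposition of the $M_i$, using uniqueness of geometric decomposition (Geometrization), to conclude $\MM_{\thin}(t)$ must be empty for $t \geq T$; (6) conclude: once the thin part is gone, the flow on $[T,\infty)$ is a smooth nonsingular flow with $|{\Rm}| < Ct^{-1}$, so there are no surgeries after $T$, and since surgeries cannot accumulate in $[0,T]$ there are only finitely many in total.

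The main obstacle I anticipate is step (2) together with step (3): controlling the hyperbolic part and its boundary \emph{uniformly in time through infinitely many surgeries}. Perelman's hyperbolic-rigidity argument gives that for each fixed hyperbolic limit the convergence improves, but one must rule out that new hyperbolic pieces keep appearing, that existing ones degenerate (pinch a geodesic, change topology), or that a surgery cuts into the thick part; bounding the total number of surgeries a priori is circular, so this has to be done by a careful induction on the "complexity" (e.g. normalized volume / number of hyperbolic pieces), showing each quantity is monotone up to small errors and takes finitely many values. A secondary difficulty is making the incompressibility of the interface tori robust: one needs the width/area monotonicity estimates to survive surgery times, which requires that surgeries be done far from the relevant minimal surfaces — here the freedom to shrink $\delta(t)$ is essential. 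Finally, quantifying "the thin part is eventually empty" into the explicit bound $|{\Rm}| < Ct^{-1}$ requires one more compactness argument on the now-nonsingular flow, using that a noncollapsed, nonsingular, long-time $3$-dimensional flow with bounded normalized curvature on a hyperbolic-type manifold has curvature decaying like $t^{-1}$.
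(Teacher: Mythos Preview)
Your outline has a genuine gap at step (5): you conclude that the thin part $\MM_{\thin}(t)$ must become \emph{empty} for large $t$, but this is false in general and is not what the paper proves. If $\MM(0)$ has a nontrivial torus decomposition with several hyperbolic pieces, the $T^2\times I$ necks joining adjacent hyperbolic cusps remain collapsed (in the sense of Proposition~\ref{Prop:thickthindec}(e)) for all time; uniqueness of the geometric decomposition only forces each component of $\MM_{\thin}(t)$ to be diffeomorphic to $T^2\times I$, not to disappear. So your step (6), which deduces $|{\Rm}|<Ct^{-1}$ from the thin part being gone, never gets off the ground.

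What the paper actually does after reaching the conclusion ``$\MM_{\thin}(t)\approx$ disjoint copies of $T^2\times I$'' is the heart of the argument, and it is missing from your proposal. The key new idea is to bound the curvature \emph{on} the thin part: since the $T^2$-fibers (or the $S^1$-fibers of the finer $V_2$-decomposition from Proposition~\ref{Prop:MorganTianMain}) are incompressible, the collapse unwinds in the universal cover, giving a uniform lower volume bound there (Lemma~\ref{Lem:unwrapfibration}); Perelman's long-time estimate lifted to the universal cover (Proposition~\ref{Prop:Per73univcover}) then yields $|{\Rm}|<Kt^{-1}$ on the ``good'' portion of the thin part. To force the entire thin component to be good (i.e.\ $\CC=V_1$ in the notation of Proposition~\ref{Prop:MorganTianMain}), the paper runs a minimal-annulus argument: two families of shrinking cuspidal loops on either side of each $T^2\times I$ neck are connected by homotopies whose minimal area satisfies $t^{-1}A(t)\to 0$ (Lemma~\ref{Lem:areaofhomotopy}); but if a $V_2$-piece were present, every $S^1$-fiber there would have to meet one of these homotopies (Lemma~\ref{Lem:hastointersectannulus}), forcing a definite lower bound on $t^{-1}A(t)$ via the local fibration of (c3). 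This contradiction, not a vanishing of the thin part, is what closes the proof.
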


We like to point out that up to now, the curvature estimate was only known to hold on the thick part.
Hence, our theorem contributes towards a better understanding of the geometry of the thin part.
Observe that the result implies that the rescaled metrics $t^{-1} g(t)$ have uniformly bounded curvature for large $t$.
Such solutions are said to be of type III and have been subject of study by Hamilton (\cite{Ham}).

We give a short outline of the proof:
The thin part of the manifold is locally collapsed along $S^1$, $T^2$ or $S^2$ fibers.
We will show that there are certain ``good'' areas where the fibers are either diffeomorphic to $S^1$ or $T^2$ and incompressible in the manifold.
Hence, if we pass to the universal cover, these areas will become non-collapsed on a local scale.
We can then use a modification of Perelman's Theorem \cite[7.3]{PerelmanII} to deduce a curvature bound on the scale $\sqrt{t}$.
By looking closer at the decomposition arising from the collapse, we can argue that if not all areas of the thin part are good, there must be some good area which is collapsed along incompressible $S^1$-fibers over a $2$-dimensional space.
Hence, by the conclusion above, this collapse takes place at scale $\sqrt{t}$.
Next, we establish the existence of minimal annuli which intersect every fiber of this fibration and whose area goes to zero compared to the scale $\sqrt{t}$.
This will then give us a contradiction implying that the thin part only consists of good areas and hence the curvature is controlled everywhere.

The paper is organized as follows:
In section \ref{sec:DefRFsurg} we clarify the concepts behind Ricci flows with surgery.
We are keeping the definitions here as general as possible so that they match or follow from existent literature on the subject.
Section \ref{sec:ExRFsurg} recapitulates known existence results for Ricci flows with surgery.
In section \ref{sec:Perelman} we quote Perelman's important long-time curvature estimate and generalize it to the universal cover.
We then explain the known geometric results arising from the long-time analysis in sections \ref{sec:thickthin} and \ref{sec:MorganTian}.
In section \ref{sec:unwrap} we analyze the behaviour of the collapse when passing to the universal cover and in section \ref{sec:minsurf} we prove bounds for the evolution of minimal spheres and annuli in Ricci flow.
Finally, the proof of the main theorem can be found in section \ref{sec:MainThm}.

I would like to thank Gang Tian for his constant help and encouragement and John Lott for many long conversations.
I am also indebted to Bernhard Leeb and Hans-Joachim Hein, who contributed essentially to my understanding of Perelman's work.
Thanks also go to Simon Brendle, Daniel Faessler, Robert Kremser, Tobias Marxen, Rafe Mazzeo, Richard Schoen, Stephan Stadler and Brian White.

\section{Definition of Ricci flows with surgery} \label{sec:DefRFsurg}
In this section, we give a precise definition of the Ricci flows with surgery that we are going to analyze.
We will mainly use the language developed in \cite{Bamler-diploma} here.
We first define Ricci flows with surgery in a very broad sense
\begin{Definition}[Ricci flow with surgery]
Consider a time interval $I \subset \IR$.
Let $T^1 < T^2 < \ldots$ be times of the interior of $I$ which form a possibly infinite, but discrete subset of $\IR$ and divide $I$ into the intervals
\[ I^1 = I \cap (-\infty, T^1), \quad I^2 = [T^1, T^2), \quad I^3 = [T^2, T^3), \quad \ldots \]
and $I^{k+1} = I \cap [T^k,\infty)$ if there are only finitely many $T^i$'s.
Consider Ricci flows $(M^1 \times I^1, g^1_t), (M^2 \times I^2, g^2_t), \ldots$ on $3$-manifolds $M^1, M^2, \ldots$.
Let $\Omega^i \subset M^i$ be open sets on which the metric $g^i_t$ converges smoothly as $t \nearrow T^i$ to some Riemannian metric $g_{T^i}$ on $\Omega_i$ and let 
\[ U^i_- \subset \Omega^i \qquad \text{and} \qquad U^i_+ \subset M^{i+1} \]
be open subsets such that there are isometries
\[ \Phi^i : (U^i_-, g_{T^i}^i) \longrightarrow (U^i_+, g_{T^i}^{i+1}), \qquad (\Phi^i)^* g_{T^i}^{i+1} |_{U_+^i} = g^i_{T^i} |_{U^i_-}. \]
We assume that we never have $U_-^i = \Omega^i = M^i$ and $U_+^i = M^{i+1}$ and that every component of $M^{i+1}$ contains a point of $U^i_+$.
Then, we call $\MM = ((T^i)_i, (M^i \times I^i, g_t^i)_i, (\Omega^i)_i, (U^i_{\pm})_i, (\Phi^i)_i)$ a \emph{Ricci flow with surgery on the time interval $I$} and $T^1, T^2, \ldots$ \emph{surgery times}.

If $t \in I^i$, then $(\MM(t), g(t)) = (M^i \times \{ t \}, g_t^i)$ is called the \emph{time $t$-slice of $\MM$}.
For $t = T^i$, we define the \emph{(presurgery) time $T^{i-}$-slice} to be $(\MM(T^{i-}), g(T^{i-})) = (\Omega^i \times \{ T^i \}, g^i_{T^i})$.
The points $\Omega^i \times \{ T^i \} \setminus U^i_- \times \{ T^i \}$ are called \emph{presurgery points} and the points $M^{i+1} \times \{ T^i \} \setminus U^i_+ \times \{ T^i \}$ \emph{postsurgery points}.
We will call a point that is not a presurgery point a \emph{non-presurgery point}.
\end{Definition}

We will make use of the following vocabulary when dealing with Ricci flows with surgery:

\begin{Definition}[Ricci flow with surgery, points in time] \label{Def:pointsurvives}
For $(x,t) \in \MM$, consider a spatially constant line in $\MM$ that starts in $(x,t)$ and goes forward or backward in time for some time $\Delta t \in \IR$ and that doesn't hit any (pre- or \hbox{post-)}surgery points except possibly at its endpoints.
When crossing surgery times, we can continue the line via the isometries $\Phi^i$.
We denote the endpoint of this line by $(x,t+\Delta t) \in \MM$.
Observe that this point is only defined if there are no surgery points between $(x,t)$ and $(x,t+\Delta t)$.
We say that a point $(x,t) \in \MM$ \emph{survives until time} $t + \Delta t$ if the point $(x,t+\Delta t) \in \MM$ is well-defined.

Observe that this notion also makes sense, if $(x, t^-) \in \MM$ is a presurgery point and $\Delta t \leq 0$.
\end{Definition}

Using this definition, we can define parabolic neighborhoods in $\MM$.

\begin{Definition}[Ricci flow with surgery, parabolic neighborhoods]
Let $(x,t) \in \MM$ (presurgery points are allowed, in this case we have to replace $t$ by $t^-$), $r \geq 0$ and $\Delta t \in \IR$.
Consider the ball $B = B(x,t,r) \subset \MM(t)$.
If $(x,t^-)$ is a presurgery point, we have to look at $B(x,t^-,r) \subset \MM(t^-)$.
For each $(x',t) \in B$ consider the union $I^{\Delta t}_{x',t}$ of all points $(x',t+t') \in \MM$ which are well-defined in the sense of Definition \ref{Def:pointsurvives} for $t' \in [0, \Delta t]$ resp. $t' \in [\Delta t, 0]$.
We say that $I^{\Delta t}_{x',t}$ is \emph{non-singular} if $(x', t + \Delta t) \in I^{\Delta t}_{x',t}$.
Define the \emph{parabolic neighborhood} $P(x,t,r,\Delta t) = \bigcup_{x' \in B} I^{\Delta t}_{x',t}$.
We call $P(x,t,r,\Delta t)$ \emph{non-singular} if all the $I^{\Delta t}_{x',t}$ are non-singular.
\end{Definition}

We will now characterize three important approximate local geometries that we will have to deal with very often: $\varepsilon$-necks, strong $\varepsilon$-necks and $(\varepsilon, E)$-caps.
The notions below also make sense for presurgery times.

\begin{Definition}[Ricci flow with surgery, $\varepsilon$-necks]
Let $\varepsilon > 0$ and consider a Ricci flow with surgery $\MM$.
Let $t$ be a time of $\MM$.
We call an open subset $U \subset \MM(t)$ an $\varepsilon$-neck, if there is a smooth bijective map $\Phi : S^2 \times (-\frac1{\varepsilon}, \frac1{\varepsilon}) \to U$ such that there is a $\lambda > 0$ with $\Vert \lambda^{-2} \Phi^* g(t) - g_{S^2 \times \IR} \Vert_{C^{[\varepsilon^{-1}]}} < \varepsilon$ where $g_{S^2 \times \IR}$ is the standard metric on $S^2 \times (-\frac1{\varepsilon}, \frac1{\varepsilon})$.

We say that $x \in \MM(t)$ is a \emph{center} of $U$ if $x \in \Phi(S^2 \times \{0\})$ for such a $\Phi$.
\end{Definition}

\begin{Definition}[Ricci flow with surgery, strong $\varepsilon$-necks]
Let $\varepsilon > 0$ and consider a Ricci flow with surgery $\MM$ and a time $t_2$.
Consider a subset $U \subset \MM(t_2)$ and assume that all points of $U$ survive until some time $t_1 <  t_2$.
Then the subset $U \times [t_1,t_2] \subset \MM$ is called a \emph{strong $\varepsilon$-neck} if there is a factor $\lambda > 0$ such that after parabolically rescaling by $\lambda^{-1}$, the flow on $U \times [t_1,t_2]$ is $\varepsilon$-close to the standard flow on $[-1,0]$.
By this we mean $\lambda^{-2} (t_2 - t_1) = 1$ and there is a diffeomorphism $\Phi : S^2 \times (-\frac1{\varepsilon}, \frac1{\varepsilon}) \to U$ such that for all $t \in [t_1, t_2]$
\[ \Vert \lambda^{-2} \Phi^* g(t) - g_{S^2 \times \IR} (\lambda^{-2} (t-t_2)) \Vert_{C^{[\varepsilon^{-1}]}} < \varepsilon. \]
Here $(g_{S^2 \times \IR}(t))_{t \in (-\infty,0]}$ is the standard Ricci flow on $S^2 \times \IR$ which has scalar curvature $1$ at time $0$.
\end{Definition}

\begin{Definition}[Ricci flow with surgery, $(\varepsilon, E)$-caps]
Let $\varepsilon, E > 0$ and consider a Ricci flow with surgery $\MM$.
Let $t$ be a time of $\MM$ and $x \in \MM(t)$.
Consider an open set $U \subset \MM(t)$ and suppose that $(\diam_t U)^2 |{\Rm}|(y,t) < E^2$ for any $y \in U$ and $E^{-2} |{\Rm}|(y_1,t) \leq |{\Rm}|(y_2,t) \leq E^2 |{\Rm}|(y_1,t)$ for any $y_1, y_2 \in U$.
Furthermore, assume that $U$ is either diffeomorphic to $\IB^3$ or $\IR P^3 \setminus \ov{\IB}^3$ and that there is a compact set $K \subset U$ such that $U \setminus K$ is an $\varepsilon$-neck.

Then $U$ is called an \emph{$(\varepsilon, E)$-cap}.
If $x \in K$ for such a $K$, then we say that $x$ is a center of $U$.
\end{Definition}

With these concepts at hand we can now give an exact description of the surgery process that will underlie the Ricci flows with surgeries which we are going to analyze.
The author has chosen the phrasing so that it includes the outcomes of the constructions presented in \cite{PerelmanII}, \cite{KLnotes}, \cite{MTRicciflow}, \cite{BBBMP} and \cite{Bamler-diploma}.

We will first need to fix a geometry which models the metric which we will endow the filling $3$-balls with after each surgery.
\begin{Definition}[surgery model]
Consider $M_{\stan} = \IR^3$ with its natural $SO(3)$-action and let $g_{\stan}$ be a complete metric on $M_{\stan}$ such that
\begin{enumerate}
\item $g_{\stan}$ is $SO(3)$-invariant,
\item $g_{\stan}$ has nonnegative sectional curvature,
\item for any sequence $x_n \in M_{\stan}$ with $\dist(0, x_n) \to \infty$, the pointed Riemannian manifolds $(M_{\stan}, g_{\stan}, x_n)$ smoothly converge to the standard $S^2 \times \IR$ of scalar curvature $R = 1$.
\end{enumerate}
For every $r > 0$, we denote the $r$-ball around $0$ by $M_{\stan}(r)$.
Let $D_{\stan} > 0$ be a positive number.
Then we call $(M_{\stan}, g_{\stan}, D_{\stan})$ a \emph{surgery model}.
\end{Definition}

\begin{Definition}[$\varphi$-positive curvature]
We say that a Riemannian metric $g$ on a manifold $M$ has \emph{$\varphi$-positive curvature} for $\varphi > 0$ if for every point $p \in M$ there is an $X > 0$ such that $\sec_p \geq - X$ and
\[ \scal_p \geq - \tfrac32 \varphi \qquad \text{and} \qquad \scal_p \geq 2 X (\log (2 X) - \log \varphi - 3). \]
\end{Definition}
Observe that by \cite{Ham} this condition is improved by Ricci flow in the following sense: If $(M, (g_t)_{t \in [t_0, t_1]})$ is a Ricci flow with $t_0 > 0$ and $g_{t_0}$ is $t_0^{-1}$-positive, then $g_t$ is $t^{-1}$-positive for all $t \in [t_0, t_1]$.

\begin{Definition}[Ricci flow with surgery, $\delta(t)$-precise cutoff] \label{Def:precisecutoff}
Let $\MM$ be a Ricci flow with surgery defined on some time interval $[0,T)$, let $(M_{\stan},g_{\stan},D_{\stan})$ be a surgery model and let $\delta : [0,\infty) \to \IR_+$ be a function.
We say that $\MM$ is \emph{performed by $\delta(t)$-precise cutoff (using the surgery model $(M_{\stan},g_{\stan}, D_{\stan})$)} if
\begin{enumerate}
\item For all $t$ the metric $g(t)$ (and $g(t^-)$ if $t$ is a surgery time) has $t^{-1}$-positive curvature.
\item For every surgery time $T^i$, the subset $\MM(T^i) \setminus U^i_+$ is a disjoint union $D^i_1 \cup \ldots \cup D^i_{m_i}$ of smoothly embedded $3$-disks.
\item For every such $D^i_j$ there is an embedding 
\[ \Phi^i_j : M_{\stan}(\delta^{-1}(T^i)) \longrightarrow \MM(T^i) \]
such that $D^i_j \subset \Phi^i_j (M_{\stan}(D_{\stan}))$ and such that for all $j = 1, \ldots, m_i$ the images $\Phi^i_j (M_{\stan}(\delta^{-1}(T^i)))$ are pairwise disjoint and there is are constants $0 <\lambda^i_j \leq \delta(T^i)$ such that 
\[ \big\Vert g_{\stan} - (\lambda^i_j)^{-2} (\Phi^i_j)^* g(T^i) \big\Vert_{C^{[\delta^{-1}(T^i)]}(M_{\stan}(\delta^{-1}(T^i)))} < \delta(T^i). \]
\item For every such $D^i_j$, the points on the boundary of $U^i_-$ in $\MM(T^{i-})$ corresponding to $\partial D^i_j$ are centers of strong $\delta(T^i)$-necks.
\item For every $D^i_j$ for which the boundary component of $\partial U^i$ corresponding to the sphere $\partial D^i_j$ bounds a $3$-disk component $(D')^i_j$ of $M^i \setminus U^i$ (i.e. a ``trivial surgery'', see below), the following holds:
For every $\chi > 0$, there is some $t_\chi < T^i$ such that for all $t \in (t_\chi,T^i)$ there is a $(1+\chi)$-Lipschitz map $\xi : (D')^i_j \to D^i_j$ which corresponds to the identity on the boundary.
\item For every surgery time $T^i$, the components of $\MM(T^{i-}) \setminus U^i_-$ are either diffeomorphic to $S^2 \times I$, $D^3$, $\IR P^3 \setminus B^3$, a spherical space form, $S^1 \times S^2$ or $\IR P^3 \# \IR P^3$.
\end{enumerate}
We will speak of each $D^i_j$ as \emph{a surgery} and if $D^i_j$ satisfies the property described in (5), we call it a \emph{trivial surgery}.
\end{Definition}
Observe that we have phrased the Definition so that if $\MM$ is a Ricci flow with surgery which is performed by $\delta(t)$-precise cutoff, it is also performed by $\delta'(t)$-precise cutoff whenever $\delta'(t) \geq \delta(t)$ for all $t$.
Note also that trivial surgeries don't change the topology of the respective component at which they are performed.

\section{Existence of Ricci flows with surgery} \label{sec:ExRFsurg}
Ricci flows with surgery and precise cutoff as introduced in Definition \ref{Def:precisecutoff} can indeed be constructed from any given initial metric.
We will make this more precise below.
To simplify things, we restrict the geometries which we want to consider as initial conditions.

\begin{Definition}[Normalized initial conditions]
We say that a Riemannian $3$-manifold $(M,g)$ is \emph{normalized} if 
\begin{enumerate}
\item $M$ is compact and orientable,
\item $|{\Rm}| < 1$ everywhere and
\item $\vol B(x,1) > \frac{\omega_3}2$ for all $x \in M$ where $\omega_3$ is the volume of a standard Euclidean $3$-ball.
\end{enumerate}
We say that a Ricci flow with surgery $\MM$ has \emph{normalized initial conditions}, if $\MM(0)$ is normalized.
\end{Definition}
Obviously, any Riemannian metric on a compact and orientable $3$-manifold can be rescaled to be normalized.
Moreover, recall
\begin{Definition}[$\kappa$-noncollapsedness]
Let $\MM$ be a Ricci flow with surgery, $(x,t) \in \MM$ (possibly a presurgery point) and $\kappa, \rho > 0$.
We say that $\MM$ is $\kappa$-noncollapsed in $(x,t)$ on scales less than $\rho$ if $\vol_t B(x,t,r) \geq \kappa r^3$ for all $0 < r < \rho$ for which
\begin{enumerate}
\item the ball $B(x,t,r)$ is relatively compact in $\MM(t)$,
\item the parabolic neighborhood $P(x,t,r, -r^2)$ is nonsingular and
\item $\Vert \Rm \Vert < r^{-2}$ on $P(x,t,r,-r^2)$.
\end{enumerate}
\end{Definition}

In order to construct a Ricci flow with surgery, we need the following characterization of regions of high curvature (see \cite{PerelmanII}, \cite{KLnotes}, \cite{MTRicciflow}, \cite{BBBMP}, \cite{Bamler-diploma}).
The power of the this proposition lies in the fact that none of the parameters depends on the number or the preciseness of the preceding surgeries.
Hence, it provides a tool to perform surgeries in a controlled way.
\begin{Proposition}[Canonical neighborhood theorem, Ricci flows with surgery] \label{Prop:CNThm-mostgeneral}
There are constants $C_0 < \infty$ and $\kappa_0 > 0$ and for every surgery model $(M_{\stan}, \linebreak[1] g_{\stan}, \linebreak[1] D_{\stan})$ and every $\varepsilon > 0$ there are a constant $E < \infty$ and continuous positive functions $r, \delta, \kappa : [0,\infty) \to \IR_+$ such that the following holds:

Let $\MM$ be a Ricci flow with surgery on some time interval $[0,T)$ which has normalized initial conditions and which is performed by $\delta(t)$-precise cutoff.
Then
\begin{enumerate}[label=(\textit{\alph*})]
\item At every time $t \in [0,T)$ the flow $\MM$ is $\kappa(t)$-noncollapsed on scale less than $\sqrt{t}$.
\item If $(x,t) \in \MM$ is a non-presurgery point with $R(x,t) \geq r^{-2}(t)$, then
\begin{enumerate}[label=(\textit{\arabic*})]
\item $(x,t)$ is either the center of a strong $\varepsilon$-neck or an $(\varepsilon, E)$-cap,
\item $\Vert \nabla R^{-1/2} (x,t) \Vert < C_0$ and $| \partial_t R^{-1}(x,t) | < C_0$,
\item $\MM$ is $\kappa_0$-noncollapsed in $(x,t)$.
\end{enumerate}
\end{enumerate}
\end{Proposition}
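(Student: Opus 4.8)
The plan is to prove all three assertions \emph{simultaneously} by induction on a discretization of the time axis, in the spirit of Perelman's ``a priori assumptions'' scheme. Fix once and for all a sequence $0 = \tau_0 < \tau_1 < \tau_2 < \ldots \to \infty$ and construct the functions $r,\delta,\kappa$ so that they are constant on each interval $[\tau_{i-1},\tau_i)$, the value on $[\tau_{i-1},\tau_i)$ being chosen only after the values on $[0,\tau_{i-1})$ have been fixed. One then shows by induction on $i$ that every $\MM$ defined on $[0,T)$ with $T \le \tau_i$, normalized initial conditions and $\delta(t)$-precise cutoff satisfies (a) and (b) throughout its domain. The crucial structural point the induction must respect is that the conclusions at a time $t$ only invoke the flow on $[0,t)$, so that shrinking $\delta$ on $[\tau_{i-1},\tau_i)$ can be used to force the conclusions on $[0,\tau_i)$ without disturbing the earlier choices; the base case near $t=0$ follows from the normalization $|{\Rm}|<1$ and standard short-time estimates.

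For the non-collapsing statements (a) and (b3) I would use Perelman's monotonicity of the reduced volume along $\mathcal{L}$-geodesics, adapted to surgery. Part (a) follows by basing the reduced volume at a slightly earlier time, where the geometry is controlled by the inductive hypothesis (or by the normalization), and propagating the resulting lower volume bound forward. The only new difficulty relative to the smooth case is that the minimizing $\mathcal{L}$-geodesics may run through surgery regions; here one uses that every surgery is performed along a strong $\delta(t)$-neck with $\delta$ small and that the filling cap is $\delta$-close to the model $(M_{\stan},g_{\stan})$, which by definition is forward-asymptotic to the round cylinder. This lets one either perturb the $\mathcal{L}$-geodesics off the surgery regions or bound the reduced volume lost there, provided $\delta$ on the relevant earlier intervals was chosen small enough --- which is exactly how the choice of $\delta$ enters the induction. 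The pointwise, scale-invariant non-collapsing (b3), with the \emph{universal} constant $\kappa_0$, comes from the local version of the same estimate: at a point with $R(x,t)\ge r^{-2}(t)$ the backward parabolic region of radius $\sim R^{-1/2}(x,t)$ contains no surgery points when $\delta$ is small, so Perelman's local non-collapsing estimate applies verbatim.

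The canonical neighborhood property (b1), together with the derivative bounds (b2), is proven by contradiction, point-picking and compactness. Suppose the statement fails on $[0,\tau_i)$ for every choice of the functions on $[\tau_{i-1},\tau_i)$; then there is a sequence of Ricci flows with surgery $\MM_k$, times $t_k$ and non-presurgery points $x_k$ with $R(x_k,t_k)$ large relative to an unbounded scale, which are not centers of a strong $\varepsilon$-neck or an $(\varepsilon,E)$-cap, or which violate (b2). By a point-picking argument one replaces $(x_k,t_k)$ by a point of comparable or larger curvature whose backward parabolic neighborhood, measured in the rescaled metric $R(x_k,t_k)\,g$, has size tending to infinity and controlled curvature --- using the a priori canonical neighborhood assumptions, the derivative bounds they supply, and (b3). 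One then distinguishes two cases. If, after rescaling, these neighborhoods contain no surgery points for a definite backward time, Hamilton's compactness theorem, (b3), and Hamilton--Ivey pinching (which, because $t^{-1}$-positivity is preserved and $R\to\infty$, forces the rescaled limit to have nonnegative curvature operator) yield a pointed limit which is a $\kappa$-solution; by Perelman's structure theory of three-dimensional $\kappa$-solutions this limit has the required canonical neighborhood, contradicting the choice of $x_k$. If instead surgery points intrude, one uses that each such surgery is modeled, up to error $\delta(t_k)\to 0$, on the standard solution started from $(M_{\stan},g_{\stan})$: either the intruding surgery forces $x_k$ itself onto a strong $\varepsilon$-neck or $(\varepsilon,E)$-cap directly from the precise-cutoff hypothesis, or one extracts a limiting flow that is a piece of the standard solution, which again has the canonical neighborhood property. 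Either way one contradicts the failure assumption, and this closes the induction.

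The main obstacle is the bookkeeping of the surgeries inside the point-picking/compactness argument: one must simultaneously (i) arrange that the conclusions at time $t$ depend only on hypotheses on $[0,t)$, so the induction over the $\tau_i$ is well-founded; (ii) ensure that, once $\delta$ is small enough on the earlier intervals, surgery regions are either avoidable by $\mathcal{L}$-geodesics or geometrically indistinguishable, on the relevant scales, from the standard solution; and (iii) upgrade the ``bounded curvature at bounded distance'' estimate, which feeds the compactness, to the surgery setting. This is precisely the delicate heart of Perelman's construction; in a self-contained treatment one assembles it from the detailed arguments of \cite{PerelmanII}, \cite{KLnotes}, \cite{MTRicciflow}, \cite{BBBMP} and \cite{Bamler-diploma}, which is also the source from which the present statement is taken.
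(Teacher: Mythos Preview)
The paper does not prove this proposition at all; it is stated with a reference to the existing literature (\cite{PerelmanII}, \cite{KLnotes}, \cite{MTRicciflow}, \cite{BBBMP}, \cite{Bamler-diploma}) and then used as a black box. Your proposal is a faithful outline of the standard Perelman argument carried out in those sources --- the time-discretized induction, reduced-volume non-collapsing with $\mathcal{L}$-geodesics avoiding surgery caps, and the point-picking/compactness dichotomy between $\kappa$-solution limits and the standard solution --- so it is consistent with what the paper invokes, though strictly speaking there is no ``paper's own proof'' to compare against.
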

Using Proposition \ref{Prop:CNThm-mostgeneral}, it is possible to give an existence result for Ricci flows with surgery.
For a proof see again the sources indicated above.
\begin{Proposition} \label{Prop:RFwsurg-existence}
Given a surgery model $(M_{\stan}, g_{\stan}, D_{\stan})$, there is a continuous function $\delta : [0, \infty) \to \IR_+$ such that if $\delta' : [0, \infty) \to \IR_+$ is a continuous function with $\delta'(t) \leq \delta(t)$ for all $t \in [0,\infty)$ and $(M,g)$ is a normalized Riemannian manifold, then there is a Ricci flow with surgery $\MM$ defined for times $[0, \infty)$ such that $\MM(0) = (M,g)$ and which is performed by $\delta'(t)$-precise cutoff. (Observe that we can possibly have $\MM(t) = \emptyset$ for large $t$.)

Moreover, if $\MM$ is a Ricci flow with surgery on some time interval $[0,T)$ which has normalized initial conditions and which is performed by $\delta(t)$-precise cutoff, then $\MM$ can be extended to a Ricci flow on the time interval $[0, \infty)$ which has $\delta'(t)$-precise cutoff on the time interval $[T, \infty)$.
\end{Proposition}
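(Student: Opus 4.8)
The plan is to set $\delta$ equal to a continuous positive function that is bounded above by the cutoff function produced by Proposition~\ref{Prop:CNThm-mostgeneral} for the given surgery model and a fixed small $\varepsilon>0$, shrunk further if necessary so that the neck-to-cap gluing below respects the pinching condition, and then to carry out the standard inductive surgery construction. Starting from the normalized metric $(M,g)$ at time $0$: by short-time existence of smooth Ricci flow and the normalization bounds the flow is nonsingular on a definite initial interval, and thereafter either exists for all time — in which case we are done — or runs into a first singular time $T^1$. At $T^1$ one passes to the open set $\Omega^1\subset M^1$ on which $g_t$ converges smoothly as $t\nearrow T^1$, with limit metric $g^1_{T^1}$, performs on it the surgery described below to obtain the manifold $M^2$ and the gluing data $(U^1_\pm,\Phi^1)$, and restarts the flow from $(M^2\times\{T^1\},g^2_{T^1})$. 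Iterating produces the data of a Ricci flow with surgery; since using a finer cutoff function $\delta'\le\delta$ only makes every surgery more precise (the monotonicity noted after Definition~\ref{Def:precisecutoff}), the same $\delta$ works for all admissible $\delta'$, and the second assertion of the proposition is just this construction started at time $T$ rather than at time $0$, the given flow on $[0,T)$ already satisfying all the hypotheses needed to continue it.

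Next I would spell out the surgery step, which is where Proposition~\ref{Prop:CNThm-mostgeneral} does the work. By part~(b), every non-presurgery point with $R\ge r^{-2}(t)$ near the singular time is the center of a strong $\varepsilon$-neck or an $(\varepsilon,E)$-cap, and the bounds on $\nabla R^{-1/2}$ and $\partial_t R^{-1}$ in (b)(2) let this description be pushed up to the singular time itself; one thereby obtains the usual decomposition of the high-curvature part of $(\Omega^i,g^i_{T^i})$ into tubes, caps, and horns attached to the compact region where $R\le\rho^{-2}$. Deep inside each horn one selects, after rescaling, a strong $\delta(T^i)$-neck of some scale $\lambda^i_j\le\delta(T^i)$, cuts along its central $2$-sphere, discards the horn tips beyond the cuts together with every connected component that is entirely covered by canonical neighborhoods — these have precisely the topologies in Definition~\ref{Def:precisecutoff}(6) — and glues onto each cutting sphere bounding a surviving piece a $\lambda^i_j$-rescaled copy of $(M_{\stan},g_{\stan})$, interpolated in the collar. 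This yields $M^{i+1}$, the subsets $U^i_-\subset\Omega^i$ and $U^i_+\subset M^{i+1}$, and the isometry $\Phi^i$; properties (2)--(4) and (6) of Definition~\ref{Def:precisecutoff} hold by construction, and (5) is the trivial-surgery case, in which the sphere we cut along also bounds a $3$-disk inside $\Omega^i$.

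The hard part will be verifying property (1) — that surgery preserves $t^{-1}$-positive curvature. On the smooth pieces this is Hamilton's result quoted after the definition of $\varphi$-positivity, and the inserted cap has nonnegative sectional curvature by the definition of a surgery model; the delicate point is the metric on the collar where the neck is interpolated to the cap. One must show that this interpolated metric still satisfies the pinching inequalities, which forces a comparison with the standard solution and a choice of surgery scale $h(t)$ small compared both to $r(t)$ and to the scale on which the pinching is already essentially sharp; this is exactly where $\delta$, and hence $h$, has to be taken small. A secondary technical point is to check that $\Omega^i\ne\emptyset$ whenever $\MM(T^{i-})\ne\emptyset$, and that every horn genuinely contains strong $\delta$-necks arbitrarily far down it — again consequences of Proposition~\ref{Prop:CNThm-mostgeneral}(b).

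Finally I would prove that the surgery times do not accumulate, which is what upgrades the construction to a flow defined on all of $[0,\infty)$. Fix $T<\infty$. On $[0,T]$ the continuous positive functions $r,\delta$, and hence the surgery scale $h$, are bounded below by a positive constant. A single surgery at a time $t\le T$ strictly decreases volume: one removes at least a fixed fraction of a $\delta(t)$-neck of scale $\sim h(t)$, of volume $\gtrsim\delta^{-1}(t)h^3(t)$, while adding only caps of volume $\lesssim h^3(t)$, so the net decrease is at least $c\,\delta^{-1}(t)h^3(t)$, bounded below on $[0,T]$ once $\delta$ is small. On the smooth pieces the pinching bound $\scal\ge-\tfrac{3}{2t}$ gives $\tfrac{d}{dt}\vol(\MM(t))\le\tfrac{3}{2t}\vol(\MM(t))$, so $t^{-3/2}\vol(\MM(t))$ is nonincreasing between surgeries; integrating over $[t_0,T]$ for a small fixed $t_0$ lying before the first singular time then bounds the total volume that can ever be removed in $[t_0,T]$, hence bounds the number of surgeries there, and there are none on $[0,t_0]$. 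Since $T$ was arbitrary, only finitely many surgeries occur in each finite interval, so the construction continues past every singular time — flowing smoothly, or trivially once $\MM(t)=\emptyset$ — and produces a flow on $[0,\infty)$. What makes this induction close is precisely that in Proposition~\ref{Prop:CNThm-mostgeneral} the functions $r,\delta,\kappa$ and the constants $E,C_0,\kappa_0$ are independent of the number and preciseness of the preceding surgeries.
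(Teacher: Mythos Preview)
Your outline is essentially the standard construction from the references the paper cites (\cite{PerelmanII}, \cite{KLnotes}, \cite{MTRicciflow}, \cite{BBBMP}, \cite{Bamler-diploma}), and the paper itself gives no proof at all for this proposition --- it simply invokes those sources with the sentence ``For a proof see again the sources indicated above.'' So there is nothing to compare against beyond noting that your sketch matches the literature's approach: run the flow to the first singular time, use the canonical neighborhood description to decompose the high-curvature region into necks, caps, horns and closed components, cut deep in each horn along a strong $\delta$-neck, cap off with rescaled copies of $(M_{\stan},g_{\stan})$, and iterate; non-accumulation of surgery times follows from the volume drop at each surgery combined with the $t^{-3/2}\vol$ monotonicity.

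Two minor remarks. First, your volume-drop argument should also account for surgeries that consist solely of discarding a closed component (no neck is cut): there the definite volume loss comes from the $\kappa$-noncollapsing in Proposition~\ref{Prop:CNThm-mostgeneral}(a) together with the curvature scale $r(t)$, not from the removed horn. Second, be aware that turning this outline into an honest proof is a substantial undertaking --- the verification that the interpolated metric in the collar preserves the pinching, and the proof that every horn contains strong $\delta$-necks of the right scale, each occupy many pages in the cited sources --- so while your plan is correct, it is a plan rather than a proof.
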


We point out that the parameters $\delta(t)$ and $\varepsilon$ in Proposition \ref{Prop:CNThm-mostgeneral} and $\delta(t)$ in Proposition \ref{Prop:RFwsurg-existence} depend on the choice of the surgery model.
\begin{quote}
\textit{\textbf{From now on we will fix a surgery model $(M_{\stan}, g_{\stan}, D_{\stan})$ for the rest of this paper and we will not mention this dependence anymore.}}
\end{quote}

\section{Perelman's longtime analysis result} \label{sec:Perelman}
Consider a Ricci flow with surgery $\MM$.
For any non-presurgery point $(x,t) \in \MM$, we define
\[ \rho (x,t) = \max \{ r > 0 \; \; : \; \; \sec \geq - r^{-2} \quad \text{on} \quad B(x,t,r) \}. \]

The following Proposition is a consequence of \cite[6.8, 7.3]{PerelmanII}:
\begin{Proposition} \label{Prop:Per73}
There is a continuous positive function $\delta : [0, \infty) \to \IR_+$ such that for every $w > 0$ there are constants $\ov{\rho}(w), \ov{r}(w) > 0$ and $T = T(w), K = K(w) < \infty$ such that:

Let $\MM$ be a Ricci flow with surgery on the time interval $[0, \infty)$ with normalized initial conditions which is performed by $\delta(t)$-precise cutoff.
Let $(x,t) \in \MM$ be a non-presurgery point with $t > T$.
\begin{enumerate}[label=(\textit{\alph*})]
\item If $0 < r \leq \min \{ \rho(x,t), \ov{r} \sqrt{t} \}$ and $\vol_t B(x,t, r) \geq w r^3$, then $|{\Rm}| < K r^{-2}$ on $B(x, t, r)$.
\item If $\vol_t B(x,t,\rho(x,t)) \geq w \rho^3 (x,t)$, then $\rho(x,t) > \ov{\rho} \sqrt{t}$ and $|{\Rm}| < K t^{-1}$ on $B(x,t,\ov{\rho}\sqrt{t})$.
\end{enumerate}
\end{Proposition}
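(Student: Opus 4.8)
The plan is to follow Perelman \cite[\S\S6--7]{PerelmanII}: I would first prove (a) by a blow-up/contradiction argument whose pointed limit is an ancient $\kappa$-solution of nonnegative curvature operator, and then deduce (b) from (a) together with the Hamilton--Ivey pinching estimate (the remark following the definition of $\varphi$-positivity) and Bishop--Gromov comparison. Throughout I would take $\delta$ to be a lower bound for the function $\delta$ furnished by Proposition \ref{Prop:CNThm-mostgeneral} for a fixed small $\varepsilon$, shrunk further if needed; the point is that $C_0$, $\kappa_0$, $E$ and the functions $r,\kappa$ there do not depend on $w$, so the constants $\ov{\rho},\ov{r},T,K$ below will depend on $w$ only.

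For (a), suppose it fails for some $w>0$. Choosing $\ov{r}_j\to0$, $T_j\to\infty$, $K_j\to\infty$, one obtains Ricci flows with surgery $\MM_j$ and non-presurgery points $(x_j,t_j)$ with $t_j>T_j$, $r_j\le\min\{\rho(x_j,t_j),\ov{r}_j\sqrt{t_j}\}$, $\vol_{t_j}B(x_j,t_j,r_j)\ge w r_j^3$, yet $\sup_{B(x_j,t_j,r_j)}|{\Rm}|\ge K_j r_j^{-2}$. Parabolically rescale $\MM_j$ by $r_j^{-2}$ and shift time so $t_j\mapsto0$. Then $B(x_j,0,1)$ has $\sec\ge-1$ and volume $\ge w$ and contains a point of curvature $\ge K_j$; moreover (i) since $r_j\le\ov{r}_j\sqrt{t_j}$ with $\ov{r}_j\to0$, the flow exists backwards from time $0$ for a time tending to $\infty$ (away from surgeries); (ii) by the $t^{-1}$-positivity of the curvature the rescaled flow has $\varphi_j$-positive curvature with $\varphi_j=r_j^2/t_j\le\ov{r}_j^2\to0$; (iii) by Proposition \ref{Prop:CNThm-mostgeneral}(a) it is $\kappa(t_j)$-noncollapsed on scales $<\sqrt{t_j}/r_j\to\infty$. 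A standard point-selection argument --- using that high-curvature points carry canonical neighbourhoods, so that the high-curvature region is a tiny neck or cap and may be assumed to lie inside $B(x_j,0,\tfrac14)$, together with the estimates of Proposition \ref{Prop:CNThm-mostgeneral}(b)(2),(3) --- then produces points $y_j\in\MM_j(0)$ with $Q_j:=|{\Rm}|(y_j,0)\to\infty$, $\dist_0(y_j,x_j)\le\tfrac14$, and nonsingular parabolic neighbourhoods $P(y_j,0,A_jQ_j^{-1/2},-A_jQ_j^{-1})$ ($A_j\to\infty$) on which $|{\Rm}|\le 4Q_j$. Rescaling by $Q_j$ and applying Hamilton's compactness theorem, a subsequence converges to a complete ancient $\kappa_0$-solution $N$ with nonnegative curvature operator (the last from $\varphi_j\to0$), non-flat since $|{\Rm}|\equiv1$ at the basepoint $p$.

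Now the contradiction. If $N$ is compact, then for large $j$ the component of $y_j$ in $\MM_j(0)$, which by the convergence is diffeomorphic to $N$ with metric $C^\infty$-close to that of $Q_j^{-1/2}N$, has diameter $\to0$ and so equals $B(x_j,0,1)$ (as $\dist_0(x_j,y_j)\le\tfrac14$); but then $w\le\vol B(x_j,0,1)=\vol(\text{component})\to0$ --- impossible. If $N$ is noncompact, then by Perelman's structure theory a noncompact $3$-dimensional ancient $\kappa$-solution satisfies $\vol_N B(p,R)=o(R^3)$ as $R\to\infty$; choosing $R$ with $\vol_N B(p,R)<\tfrac12 c(w)R^3$, where $c(w)>0$ is the volume-ratio lower bound that Bishop--Gromov yields on balls about $y_j$ of radius $\le\tfrac12$ from $\sec\ge-1$ and $\vol B(x_j,0,1)\ge w$, one gets $\vol B(y_j,0,RQ_j^{-1/2})<c(w)(RQ_j^{-1/2})^3$ for large $j$ --- contradicting that lower bound since $RQ_j^{-1/2}\to0$. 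This proves (a). The hard part of this argument is its \emph{uniformity} over all Ricci flows with surgery: one must know the point-selected parabolic neighbourhoods are genuinely nonsingular (this rests on the structure of surgery regions --- strong $\varepsilon$-necks and trivial surgeries --- and on $|\partial_t R^{-1}|<C_0$ to propagate high curvature backwards through possible surgeries, and is where $\delta(t)$ small enters), and one must control how $r(t)$, $\kappa(t)$ and $\sqrt t$ compare as $t\to\infty$ (here $\ov{r}_j\to0$, forcing $r_j=o(\sqrt{t_j})$, does the work). These are precisely the points settled in \cite[6.8, 7.3]{PerelmanII}.

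Finally (b). Given $\vol_t B(x,t,\rho)\ge w\rho^3$ with $\rho=\rho(x,t)$ and $t$ large, note $\sec\ge-\rho^{-2}$ on $B(x,t,\rho)$, so Bishop--Gromov upgrades this to $\vol_t B(x,t,r)\ge c(w)r^3$ for all $r<\rho$. If $\rho\le\ov{\rho}\sqrt t$ with $\ov{\rho}=\ov{\rho}(w)$ still to be fixed, then (a), applied with $c(w)$ in place of $w$ at scales $r\nearrow\rho$, gives $|{\Rm}|\le K\rho^{-2}$ on $B(x,t,\rho)$ with $K=K(c(w))$, hence $R\le 6K\rho^{-2}$ there. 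By the definition of $\rho(x,t)$ and compactness of $\MM(t)$ there is a point $q'\in B(x,t,\rho)$ at which some sectional curvature is $\le-\tfrac12\rho^{-2}$; the Hamilton--Ivey estimate at $q'$ (valid as $g(t)$ is $t^{-1}$-positive) forces $R(q',t)\ge\rho^{-2}\big(\log(\rho^{-2}t)-3\big)$ once $\rho^{-2}t$ is large, i.e.\ $\ov{\rho}$ is small. Comparing the two bounds on $R(q',t)$ shows $\rho^{-2}t$ is bounded in terms of $w$ alone, i.e.\ $\rho(x,t)>\ov{\rho}_0(w)\sqrt t$; taking $\ov{\rho}(w)=\min\{\ov{\rho}_0(w),\ov{r}(c(w))\}$ makes the case $\rho\le\ov{\rho}\sqrt t$ vacuous and gives the first assertion. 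For the second, since $\ov{\rho}\sqrt t<\rho(x,t)$ one has $\sec\ge-(\ov{\rho}\sqrt t)^{-2}$ and, by Bishop--Gromov again, $\vol_t B(x,t,\ov{\rho}\sqrt t)\ge c(w)(\ov{\rho}\sqrt t)^3$; applying (a) at scale $\ov{\rho}\sqrt t$ yields $|{\Rm}|<K(c(w))\ov{\rho}^{-2}t^{-1}$ on $B(x,t,\ov{\rho}\sqrt t)$, as desired.
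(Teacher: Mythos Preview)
The paper does not supply its own proof of this proposition; it merely records it as ``a consequence of \cite[6.8, 7.3]{PerelmanII}''. Your proposal is precisely a sketch of Perelman's argument in those sections: the blow-up and point-selection leading to an ancient $\kappa$-solution and a volume contradiction for (a), followed by Bishop--Gromov plus Hamilton--Ivey pinching to deduce (b). So your approach coincides with what the paper cites, and the outline is correct.

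Two small remarks on the execution. In (b), the point $q'$ with $\sec\le -\tfrac12\rho^{-2}$ that you extract from the maximality of $\rho(x,t)$ may lie on the boundary sphere $\partial B(x,t,\rho)$ rather than in the open ball; to apply the curvature bound from (a) at $q'$ one should either run (a) at a slightly larger radius, or note that the bound extends to the closure by continuity --- routine, but worth saying. In (a), the delicate issue you flag (that the point-selected parabolic neighbourhoods are genuinely nonsingular, i.e.\ avoid surgeries) is exactly where the smallness of $\delta(t)$ is used, and in Perelman's treatment this is handled by ensuring that surgery scales are far below the canonical-neighbourhood scale $r(t)$; your reference to ``strong $\varepsilon$-necks and trivial surgeries'' and to $|\partial_t R^{-1}|<C_0$ identifies the right ingredients.
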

We can generalize this Proposition by passing to the universal cover:
Consider a non-presurgery point $(x,t) \in \MM$ and $r > 0$.
Lift $x \in \MM(t)$ to the universal cover $\widetilde{\MM}(t)$ of $\MM(t)$ to obtain $\widetilde{x}$.
Then we call $\vol_t \td{B}(\widetilde{x},t, r)$ the volume of the $r$-ball around $\widetilde{x}$ in $\widetilde{\MM}(t)$.
Obviously, $\vol_t \widetilde{B}(\widetilde{x},t, r) \geq \vol_t B(x,t,r)$.

We can now state the following more general Proposition which will be crucial for the proof of Theorem \ref{Thm:main}:
\begin{Proposition} \label{Prop:Per73univcover}
Under the same assumptions as in Proposition \ref{Prop:Per73}, we have:
\begin{enumerate}[label=(\textit{\alph*})]
\item If $0 < r \leq \min \{ \rho(x,t), \ov{r} \sqrt{t} \}$ and $\vol_t \td{B}(\td{x},t, r) \geq w r^3$, then $|{\Rm}| < K r^{-2}$ on $B(x, t, r)$.
\item If $\vol_t \td{B}(\td{x},t,\rho(x,t)) \geq w \rho^3 (x,t)$, then $\rho(x,t) > \ov{\rho} \sqrt{t}$ and $|{\Rm}| < K t^{-1}$ on $B(x,t,\ov{\rho}\sqrt{t})$.
\end{enumerate}
\end{Proposition}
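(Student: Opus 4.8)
The plan is to repeat the proof of Proposition \ref{Prop:Per73} --- that is, Perelman's argument for \cite[6.8, 7.3]{PerelmanII} --- after lifting to the universal cover, the point being that the two features of $\MM$ used by that argument near the relevant point both survive the passage to $\widetilde\MM(t)$.

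First I would record these two stability facts. (1) For any non-presurgery point $(y,t)$, any lift $\td{y}$ of $y$, and any admissible scale $\rho$, the covering projection restricts to a locally isometric surjection $\td{B}(\td{y},t,\rho)\to B(y,t,\rho)$; hence $\vol_t\td{B}(\td{y},t,\rho)\ge\vol_t B(y,t,\rho)$, so that the $\kappa(t)$-noncollapsedness below scale $\sqrt{t}$ of Proposition \ref{Prop:CNThm-mostgeneral}(a) and the $\kappa_0$-noncollapsedness at high-curvature points of Proposition \ref{Prop:CNThm-mostgeneral}(b)(3) persist for $\widetilde\MM(t)$ with unchanged constants. (2) Being the center of a strong $\varepsilon$-neck or of an $(\varepsilon,E)$-cap is a scale-invariant, purely local condition on a region of bounded diameter-to-curvature-radius ratio; since the projection is a local isometry, any point $\td{y}$ lying over a non-presurgery point $y$ with $R(\td{y},t)=R(y,t)\ge r^{-2}(t)$ inherits a canonical neighborhood of $y$ --- a neck lifts verbatim, being simply connected, while for an $(\varepsilon,E)$-cap diffeomorphic to $\IR P^3\setminus\ov{\IB}^3$ one lifts the neck end together with a small neighborhood of $\td{y}$ --- along with the derivative bounds of Proposition \ref{Prop:CNThm-mostgeneral}(b)(2). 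Thus the entire conclusion of Proposition \ref{Prop:CNThm-mostgeneral} carries over to $\widetilde\MM$, with the same parameters.

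Part (a) is then obtained verbatim from Perelman's proof of \cite[6.8]{PerelmanII}, carried out upstairs. That proof --- a contradiction-and-rescaling argument --- uses only: a lower sectional curvature bound on the ball in question (here $\sec\ge-\rho^{-2}(x,t)$ on $B(x,t,r)$ pulls back to the same bound on $\td{B}(\td{x},t,r)$, the projection being a local isometry), a volume lower bound at scale $r$ (here precisely the hypothesis $\vol_t\td{B}(\td{x},t,r)\ge wr^3$), $\kappa$-noncollapsing below $\sqrt{t}$, and the canonical neighborhood structure at high-curvature points --- all available on $\widetilde\MM$ by the previous paragraph; moreover the rescaled backward parabolic neighborhoods appearing in that argument lift from $\MM$, and wherever they meet a surgery region the canonical neighborhood structure supplies the same local model as downstairs. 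The outcome is $|{\Rm}|<Kr^{-2}$ on $\td{B}(\td{x},t,r)$, hence, projecting, $|{\Rm}|<Kr^{-2}$ on $B(x,t,r)$, which is (a). Part (b) then follows from (a) exactly as in Proposition \ref{Prop:Per73}: applying (a) with $r=\rho(x,t)$ one sees that $\rho(x,t)\le\ov{\rho}\sqrt{t}$ would give a genuine curvature bound on $B(x,t,\rho(x,t))$ and hence let $\rho(x,t)$ be enlarged, contradicting its maximality, while the same application yields $|{\Rm}|<Kt^{-1}$ on $B(x,t,\ov{\rho}\sqrt{t})$.

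I expect the main obstacle to be the bookkeeping behind the third paragraph: one must verify that Perelman's compactness argument for \cite[6.8]{PerelmanII} genuinely localizes, so that the non-compactness of $\widetilde\MM(t)$ and the fact that it is not literally a Ricci flow with surgery with normalized initial conditions do no harm --- i.e.\ that every step relies only on noncollapsing and canonical-neighborhood estimates on a fixed bounded (rescaled) region, and that the backward parabolic neighborhoods used there lift to $\widetilde\MM$ up to the controlled surgery necks and caps. Once this is checked, the estimate transfers with no change of constants.
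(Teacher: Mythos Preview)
Your overall strategy --- lift to the universal cover and re-run Perelman's argument --- is exactly the paper's, but the paper packages it differently and in doing so supplies the one step you are missing.

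Rather than verifying piecemeal that noncollapsing and canonical neighborhoods lift, the paper constructs a global object, the \emph{universal covering flow} $\widetilde{\MM}$, as a bona fide Ricci flow with surgery: lift each $(M^i,g^i_t)$ to $(\widetilde{M}^i,\widetilde{g}^i_t)$, and then lift the gluing isometries $\Phi^i:U^i_-\to U^i_+$. The latter is the non-trivial point. Since $U^i_-\subset M^i$ is bounded by pairwise disjoint embedded $2$-spheres, $\pi_1(U^i_-)\to\pi_1(M^i)$ is injective; on the other side, the complement of $U^i_+$ in $M^{i+1}$ is a disjoint union of $3$-disks, so the preimage $\widetilde{U}^i_+\subset\widetilde{M}^{i+1}$ is simply connected. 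These two facts together give an injective lift $\widetilde{\Phi}^i:\widetilde{U}^i_-\to\widetilde{U}^i_+$, and $\widetilde{\MM}$ is assembled. One then simply observes that the proof of Proposition~\ref{Prop:Per73} goes through for $\widetilde{\MM}$: what is needed is completeness of time-slices and bounded curvature on compact time-intervals, not compactness.

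Your sentence ``the rescaled backward parabolic neighborhoods appearing in that argument lift from $\MM$'' is exactly where this construction is needed and where your write-up has a gap: when such a neighborhood crosses a surgery time $T^i$, there is no lift without first knowing that $\Phi^i$ lifts coherently, and your final paragraph, while rightly flagging bookkeeping, misidentifies the issue as one of localization rather than of gluing the covers across surgeries. Your treatment of canonical neighborhoods is also imprecise: an $(\varepsilon,E)$-cap diffeomorphic to $\IR P^3\setminus\ov{\IB}^3$ whose $\IZ/2$ dies in $\pi_1(M)$ unwraps to a tube $S^2\times I$, not to something you can call a cap by ``lifting the neck end together with a small neighborhood''; the paper's global construction sidesteps this case analysis entirely.
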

\begin{proof}
We first need to define the universal covering flow $\widetilde{\MM}$ of $\MM$.
Recall that $\MM = ((T^i)_i, (M^i \times I^i, g^i)_i, (\Omega^i)_i, (U^i_{\pm})_i, (\Phi^i)_i)$ where each $g^i$ is a Ricci flow on the closed $3$-manifold $M^i$ defined for times $I^i$.
We can lift each of these flows to the universal cover $\widetilde{M}^i$ of $M^i$.
Its lift $\widetilde{g}^i$ still satisfies the Ricci flow equation.
Moreover, all its time slices are complete Riemannian metrics and we have bounded curvature on compact subintervals of $I^i$.
Denote by $\widetilde{\Omega}^i$ the preimage of $\Omega^i$ under the universal covering projection for each $i$.

We will now assemble the flows $(\widetilde{M}^i \times I^i, \widetilde{g}_t^i)$ to a Ricci flow with surgery.
Observe first that for every $i$, the subset $U^i_- \subset M^i$ is bounded by pairwise disjoint, embedded $2$-spheres.
So for every point $p \in U^i_-$, the natural map $\pi_1(U^i_-, p) \to \pi_1(M^i,p)$ is an injection.
Now let $\widetilde{U}^i_+ \subset \widetilde{M}^{i+1}$ be the preimage of $U^i_+$ under the universal covering projection.
The complement of this subset is still a collection of pairwise disjoint, embedded $3$-disks and hence $\widetilde{U}^i_+$ is simply connected.
Via $(\Phi^i)^{-1} : U^i_+ \to U^i_-$ there is a covering map $\widetilde{U}^i_+ \to U^i_+ \to U^i_- \subset M^i$.
Since $\td{U}^i_+$ is simply-connected, we find a lift $\phi : \widetilde{U}^i_+ \to \widetilde{M}^i$.
Using the fact that $U^i_- \to M^i$ is $\pi_1$-injective, we conclude that $\phi$ is injective.
Denote by $\widetilde{U}^i_- \subset \widetilde{M}^i$ the image of $\phi$ and let $\widetilde{\Phi}^i : \widetilde{U}^i_- \to \widetilde{U}^i_+$ be its inverse.
Then $\widetilde{\MM} =((T^i)_i, (\widetilde{M}^i \times I^i, \widetilde{g}_t^i)_i, (\widetilde{\Omega}^i)_i, (\widetilde{U}^i_{\pm})_i, (\widetilde{\Phi}^i)_i)$ is a Ricci flow with surgery.

The proof of Proposition \ref{Prop:Per73} can still be carried out for the Ricci flow with surgery $\widetilde{\MM}$ which possibly contains non-compact time slices.
Observe here that all time slices of $\td{\MM}$ are complete and the curvature is bounded on compact time intervals.
This gives us the desired result.
\end{proof}

\section{The thick-thin decomposition} \label{sec:thickthin}
We now describe how in the longtime picture Ricci flows with surgery decompose the manifold into a thick and a thin part.
In this process, the thick part approaches a hyperbolic metric while the thin part collapses on local scales.
Compare this Proposition with \cite[7.3]{PerelmanII} and \cite[Proposition 90.1]{KLnotes}.

\begin{Proposition} \label{Prop:thickthindec}
There is a function $\delta : [0, \infty) \to \IR_+$ such that given a Ricci flow with surgery and $\delta(t)$-precise cutoff $\MM$ with normalized initial conditions defined on the interval $[0,\infty)$, we can find a constant $T_0 < \infty$, a function $w : [T_0, \infty) \to \IR_+$ with $w(t) \to 0$ as $t \to \infty$ and a collection of orientable finite volume hyperbolic manifolds $(H'_1, g_{\hyp,1}), \ldots, (H'_k, g_{\hyp, k})$ such that: \\
There are finitely many embedded tori $T_{1,t}, \ldots, T_{m,t} \subset \MM(t)$ for $t \in [T_0, \infty)$ which move by isotopies and don't hit any surgery points and which separate $\MM(t)$ into two (possibly empty) closed subsets $\MM_{\thick}(t), \linebreak[1] \MM_{\thin}(t) \subset \MM(t)$ such that
\begin{enumerate}[label=(\textit{\alph*})]
\item $\MM_{\thick}(t)$ does not contain surgery points for all $t \in [T_0, \infty)$.
\item The $T_{i,t}$ are incompressible in $\MM(t)$ and $t^{-1/2} \diam_t T_{i,t} < w(t)$.
\item The topology of $\MM_{\thick}(t)$ stays constant in $t$ and $\MM_{\thick}(t)$ is a disjoint union of components $H_{1,t}, \ldots, H_{k,t} \subset \MM_{\thick}(t)$ such that the interior of each $H_{i,t}$ is diffeomorphic to $H'_i$.
\item We can find an embedded cross-sectional torus $T'_{j,t}$ in each cusp of the $H'_i$ which moves by isotopies such that the following holds:
Chop off the ends of the $H'_i$ along the $T'_{j,t}$ and call the remaining open manifolds $H''_{i,t}$.
Then each $H''_{i,t}$ contains a $w^{-1}(t)$-tubular neighborhood of the thick part\footnote{On the hyperbolic manifolds $H'_i$ the \emph{thick part} denotes the part in which the injectivity radius is larger than the Margulis constant.} of $H'_i$ and there are smooth families of diffeomorphisms $\Psi_{i,t} : H''_{i,t} \to H_i$ which become closer and closer to being isometries, i.e.
\[ \big\Vert \tfrac1t \Psi_{i,t}^* g(t) - g_{\hyp,i} \big\Vert_{C^{[w^{-1}(t)]}(H''_{i,t})} < w(t) \]
and which move slower and slower in time, i.e.
\[  \sup_{H''_{i,t}} t^{1/2} | \partial_t \Psi_{i,t} |  < w(t) \]
for all $t \in [T_0, \infty)$ and $i = 1, \ldots, k$.
\item A large neighborhood of the part $\MM_{\thin}(t)$ is better and better collapsed, i.e. for every $t \geq T_0$ and $x \in \MM(t)$ with 
\[ \dist_t (x, \MM_{\thin}(t)) < w^{-1}(t) \sqrt{t} \]
we have
\[ \vol_t B\big( x,t,\min\{ \rho(x,t), \sqrt{t} \} \big) < w(t) \big( \min \{ \rho(x,t), \sqrt{t} \} \big)^3. \]
\end{enumerate}
\end{Proposition}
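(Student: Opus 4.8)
The plan is to reconstruct Perelman's thick--thin decomposition (\cite[\S7--8]{PerelmanII}, in the form given in \cite{KLnotes} and \cite{MorganTian}) while keeping the collapsing modulus $w(t)$ explicit. First I would control the normalized volume $\ov{V}(t) = t^{-3/2}\vol_t\MM(t)$: from $\tfrac{d}{dt}\vol_t\MM(t) = -\int_{\MM(t)} R\,d\mu$, the bound $R \geq -\tfrac{3}{2t}$ (a consequence of $t^{-1}$-positivity of the curvature, cf.\ \cite{Ham}), and the fact that every surgery removes volume, one obtains $\tfrac{d}{dt}\log\ov V \leq 0$ between surgeries with downward jumps at surgery times, so $\ov V(t)\searrow V_\infty\geq 0$ and $\int_{T_0}^\infty(-\tfrac{d}{dt}\ov V)\,dt < \infty$. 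Next, for any sequence $t_j\to\infty$ and points $x_j\in\MM(t_j)$ at which the volume ratio at scale $\min\{\rho(x_j,t_j),\sqrt{t_j}\}$ stays bounded below by some fixed $w_0>0$, Proposition~\ref{Prop:Per73} gives $|{\Rm}| \leq K t_j^{-1}$ on a ball of radius $\sim\sqrt{t_j}$ about $x_j$; combined with $\kappa(t)$-noncollapsedness (Proposition~\ref{Prop:CNThm-mostgeneral}(a)) and Hamilton's compactness theorem, the rescaled pointed flows $(t_j^{-1}g(t_j),x_j)$ subconverge to a complete limit flow, which, because $\tfrac{d}{dt}\ov V\to 0$, is volume-stationary for the volume-normalized flow and hence (rigidity in the monotonicity) a finite-volume hyperbolic manifold of sectional curvature $-\tfrac14$. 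Only finitely many such pieces $(H'_1,g_{\hyp,1}),\dots,(H'_k,g_{\hyp,k})$ arise, with total volume controlled by $V_\infty$.

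I would then show these pieces stabilize inside $\MM(t)$ for large $t$. As in \cite[\S8]{PerelmanII}, starting from the $C^k$-closeness produced by the compactness argument one improves to genuine smooth families of diffeomorphisms $\Psi_{i,t}$ from truncated copies $H''_{i,t}$ of the $H'_i$ onto pairwise disjoint domains $H_{i,t}\subset\MM(t)$ with $\big\Vert\tfrac1t\Psi_{i,t}^*g(t) - g_{\hyp,i}\big\Vert_{C^{[w^{-1}(t)]}} < w(t)$, using a harmonic-map/implicit-function-theorem construction together with Mostow rigidity (which also forces the limiting hyperbolic structures to be the fixed $H'_i$); the slow-motion bound $\sup_{H''_{i,t}} t^{1/2}|\partial_t\Psi_{i,t}| < w(t)$ comes out of the same construction. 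Set $\MM_{\thick}(t) = \bigcup_i H_{i,t}$, with the $T_{i,t}$ the boundary tori coming from cross-sectional tori $T'_{j,t}$ of the hyperbolic cusps. In the rescaled metric the $T_{i,t}$ lie in a region $C^k$-close to a standard cusp cross-section, so $t^{-1/2}\diam_t T_{i,t}\to 0$ and they move by isotopy. Because $\MM_{\thick}(t)$ carries $|{\Rm}| \leq Kt^{-1}$ --- far below the surgery-scale curvature --- no surgery point lies in $\MM_{\thick}(t)$ and the tori are disjoint from the high-curvature surgery caps; this establishes (a), (c), (d), and the metric part of (b).

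The main obstacle is the incompressibility of the $T_{i,t}$ in $\MM(t)$, the remaining part of (b). If some $T_{i,t}$ admitted an essential compressing curve $\gamma$, I would minimize area over disks in $\MM(t)$ bounded by $\gamma$ (Meeks--Yau, Hass--Scott) to obtain a minimal disk of area $A(t)$, for which the standard second-variation and Gauss--Bonnet estimate of Hamilton gives, in a barrier sense, $\tfrac{d}{dt}A(t) \leq -2\pi - \tfrac12 R_{\min}(t)A(t) \leq -2\pi + \tfrac{3}{4t}A(t)$; hence $\tfrac{d}{dt}\big(t^{-3/4}A(t)\big) \leq -2\pi t^{-3/4}$, and integrating forces $t^{-3/4}A(t)\to-\infty$, contradicting $A(t)\geq 0$. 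The delicate point --- where I expect to spend most of the effort --- is running this argument across surgery times: one must arrange that the minimizing disk avoids the surgery caps (which are diffeomorphic to $\IB^3$ or $\IR P^3\setminus\ov{\IB}^3$, hence topologically trivial, so an essential disk can be homotoped off them), that the area bound is not destroyed by the surgery jumps, and that the minimizer does not escape into the cuspidal ends (excluded by the near-hyperbolic collar geometry).

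Finally I would verify that $\MM_{\thin}(t) := \MM(t)\setminus\operatorname{int}\MM_{\thick}(t)$ is collapsed in the sense of (e) and extract $w$. This is a compactness argument dual to the construction of the hyperbolic pieces: were (e) to fail, there would be $t_j\to\infty$ and points $x_j$ within distance $< w^{-1}(t_j)\sqrt{t_j}$ of $\MM_{\thin}(t_j)$ with $\vol_{t_j}B(x_j,t_j,r_j) \geq w_0 r_j^3$ for $r_j = \min\{\rho(x_j,t_j),\sqrt{t_j}\}$; rescaling by $t_j^{-1}$ and applying Proposition~\ref{Prop:Per73}(a) together with $\kappa$-noncollapsedness extracts a non-collapsed hyperbolic limit, so $x_j$ would have to lie within bounded rescaled distance of the thick part, hence in $\MM_{\thick}(t_j)$ for large $j$, contradicting that $x_j$ sits a definite distance into the thin part --- this uses the \emph{maximality} of the hyperbolic pieces, i.e.\ that the construction above already exhausts all non-collapsed regions. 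Taking $w(t)$ to be the worst modulus over the finitely many such compactness statements and diagonalizing yields $w(t)\to 0$, which completes the proof.
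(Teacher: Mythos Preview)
The paper does not supply its own proof of this proposition: it is stated as a known result, with the remark ``Compare this Proposition with \cite[7.3]{PerelmanII} and \cite[Proposition 90.1]{KLnotes}.'' Your proposal is precisely a sketch of the Perelman/Hamilton argument recorded in those references --- normalized volume monotonicity, compactness to extract the finite-volume hyperbolic limits of constant sectional curvature $-\tfrac14$, stabilization via harmonic maps and Mostow rigidity, Hamilton's minimal-disk area estimate for incompressibility of the cuspidal tori, and a contradiction/compactness argument for the collapse of the thin part --- so it is correct and follows the same route the paper implicitly invokes.
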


\section{Analysis of the thin part}  \label{sec:MorganTian}
Based on property (e) of Proposition \ref{Prop:thickthindec} we can analyze the thin part $\MM_{\thin}(t)$ for large $t$ and recover its graph structure geometrically.
The following result follows from the work of Morgan and Tian (\cite{MorganTian}).
We have altered its phrasing to include more geometric information.
The reader can find an explanation below of where to find each of the following conclusions in their paper.
Similar results can also be found in (\cite{KLcollapse}), (\cite{BBMP2}), (\cite{CG}) and (\cite{Faessler}).

\begin{Proposition} \label{Prop:MorganTianMain}
For every two continuous functions $\ov{r}, K : (0, 1) \to (0, \infty)$ and every $\mu > 0$ there are constants $w_0 = w_0(\mu, \ov{r}, K) > 0$, $0 < s (\mu, \ov{r}, K) < \frac1{10}$ and $a(\mu) > 0$, monotone in $\mu$, such that:

Let $(M,g)$ be a Riemannian manifold and $M' \subset M$ a closed subset such that
\begin{enumerate}[label=(\textit{\roman*})]
\item Each component of $\partial M'$ is an embedded, incompressible torus of diameter $< w_0$ and around each such component there is a neighborhood in $M$ which is diffeomorphic to $T^2 \times I$ and which contains all points within distance less than $1$ and $- \frac{5}{16} \leq \sec \leq - \frac{3}{16}$ there
\item For all $x \in M'$ and $\rho_1 (x) = \min \{ \rho(x), 1 \}$ we have
\[ \vol B(x, \rho_1(x) ) < w_0 \rho_1^3(x). \]
\item For all $w \in (w_0, 1)$, $r < \ov{r}(w)$ and $x \in M'$ we have: if $\vol B(x,r) > w r^3$ and $r < \rho(x)$, then $|{\Rm}|, |\nabla \Rm |, | \nabla^2 \Rm | < K(w) r^{-2}$ on $B(x,r)$.
\end{enumerate}

Then either $M' = M$ and $M$ is diffeomorphic to an infra-nilmanifold or a manifold which also carries a metric of non-negative sectional curvature and $\diam M < \mu \rho_1(x)$ for all $x \in M$, or the following holds:

There are finitely many embedded $2$-tori $\Sigma^T_i$ and $2$-spheres  $\Sigma^S_i \subset M'$ which are pairwise disjoint and disjoint from $\partial M'$ as well as closed subsets $V_1, V_2, V'_2 \subset M'$ such that
\begin{enumerate}[label=(a\textit{\arabic*})]
\item $M' = V_1 \cup V_2 \cup V'_2$, the interiors of the sets $V_1, V_2$ and $V'_2$ are pairwise disjoint and $\partial V_1 \cup \partial V_2 \cup \partial V'_2 = \partial M' \cup \bigcup_i \Sigma^T_i \cup \bigcup_i \Sigma^S_i$.
Obviously, no two components of the same set share a common boundary.
\item $\partial V_1 = \partial M' \cup \bigcup_i \Sigma^T_i \cup \bigcup_i \Sigma^S_i$.
In particular, $V_2 \cap V_2' = \emptyset$ and $V_2 \cup V_2'$ is disjoint from $\partial M'$.
\item $V_1$ consists of components diffeomorphic to one of the following manifolds:
\[ T^2 \times I, \; S^2 \times I, \; \Klein^2 \widetilde{\times} I, \; \IR P^2 \widetilde{\times} I, \;  D^2 \times S^1, \; D^3, \]
a $T^2$ bundle over $S^1$, $S^2 \times S^1$ or the union of two (possibly different) components listed above along their $T^2$- or $S^2$-boundary.
\item Every component of $V_2'$ has exactly one boundary component and this component borders $V_1$ on the other side.
Moreover, every component of $V'_2$ is diffeomorphic to
\[ D^2 \times S^1, \; D^3, \; L(p,q) \setminus B^3, \; \Klein^2 \widetilde{\times} I. \]
\end{enumerate}

We can further characterize the components of $V_2$:
In $V_2$ we find embedded $2$-tori $\Xi^T_i$ and $\Xi^O_i$  which are pairwise disjoint and disjoint from the boundary $\partial V_2$.
Furthermore, there are embedded closed $2$-annuli $\Xi^A_i \subset V_2$ whose interior is disjoint from the $\Xi^T_i$, $\Xi^O_i$ and $\partial V_2$ and whose boundary components lie in the components of $\partial V_2$ which are spheres.
Each spherical component of $\partial V_2$ contains exactly two such boundary components which separate the sphere into two (polar) disks and one (equatorial) annulus $\Xi^E_i$.
We also find closed subsets $V_{2,\textnormal{reg}}, \linebreak[1] V_{2, \textnormal{cone}}, \linebreak[1] V_{2, \partial} \subset V_2$ such that
\begin{enumerate}[label=(b\textit{\arabic*})]
\item  $V_{2, \textnormal{reg}} \cup V_{2, \textnormal{cone}} \cup V_{2, \partial} = V_2$ and the interiors of these subsets are pairwise disjoint.
Moreover, $\partial V_{2, \textnormal{reg}}$ is the union of $\bigcup_i \Xi^T_i \cup \bigcup \Xi^O_i \cup \bigcup_i \Xi^A_i \bigcup_i \Xi^E_i$ and the components of $\partial V_2$ which are diffeomorphic to tori.
\item $V_{2, \textnormal{reg}}$ carries an $S^1$-fibration which is compatible with its boundary components and all its annular regions.
\item The components of $V_{2, \textnormal{cone}}$ are diffeomorphic to solid tori ($\approx D^2 \times S^1$) and bounded by the $\Xi^T_i$ such that the fibers of $V_{2, \textnormal{reg}}$ on their boundaries are not nullhomotopic inside $V_{2, \textnormal{cone}}$.
\item The components of $V_{2, \partial}$ are either solid tori and bounded by the $\Xi^O_i$ such that the $S^1$-fibers of $V_{2, \textnormal{reg}}$ on the $\Xi^O_i$ are nullhomotopic inside the $V_{2, \partial}$ or they are solid cylinders ($\approx D^2 \times I$) such that their two diskal boundary components are polar disks on $\partial V_2$ and their annular boundary component is one of the $\Xi^A_i$.
Every polar disk and every $\Xi^A_i$ bounds such a component on exactly one side.
\end{enumerate}

We now explain the geometric properties of this decomposition:
\begin{enumerate}[label=(c\textit{\arabic*})]
\item For every $x \in V_1$, the ball $(B(x,\rho_1(x)), \rho_1^{-1}(x) g, x)$ is $\mu$-close (in the Gromov-Hausdorff sense) to a $1$-dimensional interval $(J, g_{\eucl}, \ov{x})$ or an $S^1$ of length $> a(\mu)$.
In particular, if $x$ lies in a component of $V_1$ which is diffeomorphic to $T^2 \times I$ and does not border a component of $V'_2$, then $(J, \ov{x})$ can be chosen to be $( (-1,1), 0)$.

If $y \in B(x, \rho_1(x))$ is a point which is at least $\frac1{25}$ away from the endpoints of $J$ via this identification, then we can find an open subset $U$ with $B(y, \frac1{50} \rho_1(x)) \subset U \subset B(y, \frac1{25} \rho_1(x))$, a subinterval $J' \subset J$ and a map $p : (U, \rho_1^{-1}(x) g)  \to (J, g_{\eucl})$ such that
\begin{enumerate}
\item[({$\alpha$})] $p$ is $1$-Lipschitz and its differential has an eigenvalue $> 1- \mu$ everywhere,
\item[({$\beta$})] $U$ is diffeomorphic to $S^2 \times J'$ or $T^2 \times J'$ such that $p$ the projection map onto the interval,
\item[({$\gamma$})] the fibers of $p$ have diameter at most $\mu$.
\end{enumerate}
\item For every $x \in V_2$, the ball $(B(x,\rho_1(x)), \rho_1^{-1}(x) g, x)$ is $\mu$-close to a $2$-dimensional pointed Alexandrov space $(X, \ov{x})$ of area $> a$.
\item For every $x \in V_{2, \textnormal{reg}}$, the ball $(B(x, s \rho_1(x)), s^{-1} \rho_1^{-1} (x), x)$ is $\mu$-close to a standard $2$-dimensional Euclidean ball $(B = B_1(0), g_{\eucl}, \ov{x} = 0)$.

Moreover, there is an open subset $U$ with $B(x,\frac12 s \rho_1(x)) \subset U \subset \linebreak[1] B(x, \linebreak[0] s \rho_1 (x))$, a smooth map $p : U \to \IR^2$ such that:
\begin{enumerate}
\item[({$\alpha$})] there are vector fields $X_1, X_2$ on $U$ such that $dp (X_i) = \frac{\partial}{\partial x_i}$ and $X_1, X_2$ are almost orthonomal, i.e. $| \langle X_i, X_j \rangle - \delta_{ij} | < \mu$ for all $i,j = 1,2$,
\item[({$\beta$})] $U$ is diffeomorphic to $B^2 \times S^1$ such that $p : U \to p(U)$ corresponds to the projection onto $B^2$ and the $S^1$-fibers are isotopic to the fibers of the fibration on $V_{2, \textnormal{reg}}$.
\item[({$\gamma$})] the fibers of $p$ as well as the fibers of $V_{2, \textnormal{reg}}$ on $U$ have diameter at most $\mu$ and both families of fibers enclose an angle $< \mu$ with each other.
\end{enumerate}
\item For every $x \in V_{2, \textnormal{cone}}$, the ball $B(x, \frac1{10} \rho_1(x))$ covers the component of $V_{2, \textnormal{cone}}$ in which $x$ lies.
\end{enumerate}
\end{Proposition}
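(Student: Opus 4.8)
The plan is to reduce the statement to the structure theory of $3$-manifolds that are collapsed with a local lower curvature bound, in the form worked out by Morgan and Tian \cite{MorganTian}, which builds on Shioya--Yamaguchi \cite{ShioyaYamaguchi} and on the fibration theory of Cheeger--Fukaya--Gromov. For each $x \in M'$ I would look at the rescaled pointed ball $(B(x,\rho_1(x)), \rho_1^{-2}(x) g, x)$: by the definition of $\rho$ one has $\sec \geq -1$ on it, by (ii) its volume is $< w_0$, and by (iii), on every sub-ball on which the rescaled metric is not collapsed, one has the a priori bounds on $|{\Rm}|$, $|\nabla \Rm|$ and $|\nabla^2 \Rm|$ needed to run the Cheeger--Gromov smoothing and fibration construction there. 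After choosing the thresholds $w_0$, $s$ and $a$ suitably small (and $a$ monotone in $\mu$), these are exactly the hypotheses of Morgan and Tian's collapsing theorem, whose conclusion supplies the decomposition $M' = V_1 \cup V_2 \cup V_2'$, the Seifert / graph structure on $V_2$, and the topological assertions (a1)--(a4) and (b1)--(b4); the degenerate alternative ($M' = M$ an infra-nilmanifold or a manifold of nonnegative sectional curvature of small rescaled diameter) is the case in which the local limits above are $0$-dimensional everywhere. The first part of the proof is then just to verify that (i)--(iii) match their hypotheses and to record the dictionary between their pieces and ours.

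What has to be extracted with some care is the quantitative geometric description (c1)--(c4), which is not stated in that generality in \cite{MorganTian}. For each $x$ the collapsing theory produces the pointed Gromov--Hausdorff limit $(X, \ov{x})$ of the rescaled ball, and I would classify $x$ by $\dim X$. If $\dim X = 1$, then $X$ is an interval or a circle (of length $> a(\mu)$, once the injectivity radius of the base is controlled) and $x \in V_1$; the associated fibration has $2$-dimensional infra-nilmanifold fibers, hence $T^2$ or $\Klein^2 \widetilde{\times} I$, and conclusion (c1) is read off from the almost-Riemannian-submersion structure of this fibration, the refinement for $T^2 \times I$-components not bordering $V_2'$ coming from the fact that then the limiting interval has no endpoint interior to $M'$. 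If $\dim X = 2$, then $X$ is a $2$-dimensional Alexandrov space of area $> a$ and $x \in V_2$, which is (c2); here I would further distinguish by the local structure of $X$ near $\ov{x}$: a regular interior point puts $x$ in $V_{2,\mathrm{reg}}$ and gives a local $S^1$-fibration over a $2$-disk realized as an almost-Riemannian submersion, whence the maps $p$ with almost-orthonormal horizontal lifts $X_1, X_2$ and the fiber-diameter and angle bounds of (c3); a cone point puts $x$ in $V_{2,\mathrm{cone}}$, whose component is then a solid torus on which the generic fiber is not nullhomotopic and which is covered by $B(x, \tfrac{1}{10}\rho_1(x))$, which is (c4); a boundary point of $X$ puts $x$ in $V_{2,\partial}$. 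Hypothesis (i) enters here to ensure that the boundary tori, and the tori produced inside $M'$, are genuinely $\pi_1$-injective, so that the topological conclusions are as claimed.

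The last and most delicate step is to glue these local structures into the global decomposition. The collapsing direction at each point is canonical up to isotopy --- it is the fiber of the Cheeger--Fukaya--Gromov fibration --- so on overlaps the local $S^1$- or $T^2$-fibrations are isotopic and patch together; the loci where the structure type changes (a $1$-dimensional limit meeting a $2$-dimensional one, a regular $2$-dimensional limit meeting a cone point or a boundary point, an $S^1$-fiber becoming incompressible or nullhomotopic on the other side) are precisely where one inserts the separating tori $\Sigma^T_i$, $\Xi^T_i$, $\Xi^O_i$, the spheres $\Sigma^S_i$ and the annuli $\Xi^A_i$, $\Xi^E_i$. The topological types in (a3) and (a4) then follow from the classification of Seifert fibered spaces, of $T^2$-bundles over $S^1$, and of the caps ($D^2 \times S^1$, $D^3$, $L(p,q)\setminus B^3$, $\Klein^2 \widetilde{\times} I$) that can fill a torus boundary component of $V_1$. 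I expect the main obstacle to be not the citation of \cite{MorganTian} itself, but the upgrade from the stratified Gromov--Hausdorff / Alexandrov picture to honest \emph{smooth} fibration data satisfying the stated quantitative estimates, together with its consistent globalization over all of $M'$ --- this is exactly where the full strength of the Cheeger--Fukaya--Gromov fibration theorems, the local curvature bounds (iii) in the non-collapsed directions, and Morgan and Tian's bookkeeping are all required.
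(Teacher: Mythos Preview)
Your plan is correct and matches the paper's approach, which is likewise a reduction to \cite{MorganTian}: rather than redoing the gluing of local fibrations, the paper simply supplies pinpoint citations (their Theorem~1.1 for the sets $V_1, V_2, V'_2$; Propositions~4.4 and~5.2, Lemmas~4.38, 5.3, 5.7 and Theorem~3.22 for (c1)--(c4); and \cite[Corollary~0.13]{FY} for the small-diameter alternative) and checks that (i)--(iii) feed into their hypotheses. The one point to sharpen is your placement of $V'_2$: it is not detected by the dimension of the limit at scale $\rho_1$ --- its points look $1$-dimensional there, so your dichotomy would dump them into $V_1$ --- but is singled out as those components that \emph{expand} to $2$-dimensional at a larger scale in the sense of \cite[Definition~5.4]{MorganTian}.
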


\begin{proof}
Conditions 1.-3. in \cite[Theorem 0.2]{MorganTian} follow from assumptions (i)-(iii) if we replace $w_0$ by a sequence $w_n \to 0$; except for the higher derivative bounds in 3. resp (iii) which are not really needed in the proof.
If $(\diam M) \rho_1^{-1} (x)$ is sufficiently small for some $x \in M$, then we can use \cite[Corollary 0.13]{FY} or arguments of the proof of \cite[Lemma 1.5]{MorganTian} to conclude that either $M$ carries a metric of non-negative curvature or it is an infra-nilmanifold.
So in the following we can assume that $(\diam M) \rho_1^{-1}(x) > \min \{c, \mu \} > 0$ for some universal $c > 0$ and drop Assumption 1 in \cite{MorganTian}.

Now using \cite[Theorem 1.1]{MorganTian}, we choose $V_1$ to be the set $V_{n,1}$ minus the $3$-balls whose closures were added in \cite[subsection 5.4.2]{MorganTian} and $V_2 \cup V'_2$ to be its complement in $M'$.
Let the set $V'_2$ consist of all such components which are near to $1$-dimensional spaces but which expand to be near $2$-dimensional spaces in the sense of \cite[Definition 5.4]{MorganTian}.
The topology of the components of $V'_2$ can be deduced using a better lower bound on the sectional curvature at the local scale.
For our purposes it will just be important that each of these components has only one boundary component.

By construction, for every $x \in V_1$, the ball $(B(x, \rho_1(x)), \rho_1^{-1}(x) g, x)$ is either $\varepsilon$-close to one of the required $1$-manifolds and sufficiently far away from boundary points or it lies in a product structure with $\varepsilon'$-control (see \cite[Propostion 5.2]{MorganTian}) or it is a component of type 2. as described in \cite[Lemma 5.3]{MorganTian}.
Note hereby that the constants $\varepsilon'$ and $\varepsilon$ can be chosen arbitrarily small (see subsection 4.8 in \cite{MorganTian}) and hence we can choose them for our purposes such that $\mu$-closeness is guaranteed in conclusion (c1).
The second part of (c1) follows from \cite[Lemma 4.38]{MorganTian}, possibly after reducing $\varepsilon$ and $\varepsilon'$ again.

For all points $x \in V_2$ we can conclude that $(B(x, \rho_1(x)), \rho_1^{-1}(x) g, x)$ is $\widehat{\varepsilon}$-close to a standard $2$-dimensional ball $\ov{B}$ of area $\geq a$ (see \cite[Proposition 5.2]{MorganTian}) or there is such a point $\frac1{10} \rho_1(x)$-close to $x$.
The second case only applies when $x$ lies very close to the boundary of $V_2$ and has to do with the fact that the set $V_2$ was constructed in several steps which involved removing and adding small collar neighborhoods.
Observe that if we decrease $a$ and increase $\widehat{\varepsilon}$ by a controlled amount, we can get rid of this second case.
Choosing $\widehat{\varepsilon} < \mu$ establishes (c2).
Observe hereby that the constant $a$ does not depend on the choice of $\widehat{\varepsilon}$ (see again subsection 4.8 in \cite{MorganTian}).

Now use \cite[Lemma 5.7]{MorganTian} and \cite[Theorem 3.22]{MorganTian} to construct $V_{2, \textnormal{reg}} = U_{2, \textnormal{generic}}$.
Furthermore, let $V_{2, \textnormal{cone}}$ be the set of all points in the complement near interior cone points and $V_{2, \partial}$ be the set of all points near flat $2$-dimensional boundary points or boundary corners.
Then, all points $x \in V_{2, \textnormal{reg}}$ are interior $\mu^{MT}$-flat on all scales $\leq s_2^{MT} \rho_1(x)$ (here $\mu^{MT}$ and $s_2^{MT}$ denote the constants from \cite{MorganTian}).
The constant $\mu^{MT}$ can be chosen arbitrarily small depending on $\widehat{\varepsilon}$.
This establishes the first part of (c3).
For the second part, we use \cite[Proposition 4.4]{MorganTian} to obtain a set $U$ together with an $S^1$-fibration structure.
Consider the base $B$ of this fibration, let $p : U \to B$ be the projection and equip $B$ with the submersion metric.
The sectional curvatures of this metric cannot be less than those on $U$ and the metric is Gromov-Hausdorff close to the metric on $U$ which in turn is Gromov-Hausdorff close to the Euclidean metric.
Hence, after possibly decreasing $U$ and $B$, we can find a coordinate system on $B$ for which $\frac{\partial}{\partial x_1}, \frac{\partial}{\partial x_2}$ are sufficiently close to being orthonormal.

Claim (c4) follows immediately.
\end{proof}

\section{Further geometric properties of the thin part} \label{sec:unwrap}
In this section we will identify parts in the decomposition of Proposition \ref{Prop:MorganTianMain} which become non-collapsed when we pass to the universal cover.

\begin{Lemma} \label{Lem:unwrapfibration}
There are constants $\mu_0, w_1 > 0$, where $w_1$ only depends on $s(\varepsilon, \linebreak[1]  \mu_0, \linebreak[1]  \ov{r}, \linebreak[1] K)$, such that:
Consider the situation of Proposition \ref{Prop:MorganTianMain} and assume $\mu \leq \mu_0$.
Let $x \in M'$ and consider one of the following cases:
\begin{enumerate}[label=(\roman*)]
\item $x \in \CC$ where $\CC$ is a component of $V_2$ with the property that the $S^1$-fiber of $\CC \cap V_{2, \textnormal{reg}}$ has infinite order in $\pi_1(M)$ or
\item $x \in \CC$ where $\CC$ is a component of $V_1$ which is diffeomorphic to $T^2 \times I$, not adjacent to any component of $V'_2$ and whose cross-sectional tori are incompressible in $M$.
\end{enumerate}
Then $\vol \td{B}(\widetilde{x}, r) \geq w_1 r^3$ for all $r \leq \rho_1(x)$ where $\td{B}(\widetilde{x}, r)$ denotes the $r$-ball around a lift $\widetilde{x}$ of $x$ in the universal cover $\widetilde{M}$ of $M$.
\end{Lemma}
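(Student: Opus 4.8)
The plan is to reduce everything to a single small scale by monotonicity, extract a local fibration near $x$ from Proposition~\ref{Prop:MorganTianMain}, pass to the universal cover where the fibre unwraps to a line (resp. a plane), and observe that the resulting metric is uniformly bi-Lipschitz to a flat one.

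\emph{Monotonicity reduction.} Since $r\mapsto\vol\td B(\td x,r)$ is non-decreasing, it suffices to produce controlled constants $c_0\in(0,1]$, $c>0$ with $\vol\td B(\td x,r)\geq c\,r^3$ for all $r\leq c_0\rho_1(x)$: then for $c_0\rho_1(x)<r\leq\rho_1(x)$ one has $\vol\td B(\td x,r)\geq\vol\td B(\td x,c_0\rho_1(x))\geq c(c_0\rho_1(x))^3\geq cc_0^3r^3$, so $w_1=cc_0^3$ works. In case~(ii) $c_0$ will be universal; in case~(i) we will get $c_0=\tfrac12 s$, which accounts for the stated dependence of $w_1$ on $s$.

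\emph{Local fibered model.} In case~(i), if $x\in\CC\cap V_{2,\textnormal{reg}}$, conclusion (c3) gives, at scale $s\rho_1(x)$, an open $U$ with $B(x,\tfrac12 s\rho_1(x))\subset U\subset B(x,s\rho_1(x))$, a fibration $p:U\to B^2$ with circle fibres of diameter $\leq\mu s\rho_1(x)$, an almost-orthonormal horizontal frame $X_1,X_2$ with $dp(X_i)=\partial/\partial x_i$, and fibres isotopic to the $V_{2,\textnormal{reg}}$-fibration of $\CC$; since that circle has infinite order, $\pi_1(U)\cong\IZ$ injects into $\pi_1(M)$. If $x\in V_{2,\textnormal{cone}}$ or $V_{2,\partial}$ one first passes, using (c4) resp. the structure near flat boundary points, to the reduced situation — at a cone point, to the exceptional fibre, which still has infinite order in $\pi_1(M)$ because a nonzero power of it is the regular fibre class. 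In case~(ii), conclusion (c1) (in the special form valid because $\CC\cong T^2\times I$ does not border $V'_2$) lets one glue the $T^2$-charts over the central part of the interval into a genuine product $W\cong T^2\times(-a,a)$ with $a\sim\rho_1(x)$ that contains $B(x,\tfrac34\rho_1(x))$; incompressibility of $\CC$ makes these $T^2$'s incompressible in $M$, so $\pi_1(W)\cong\IZ^2$ injects into $\pi_1(M)$.

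\emph{Unwrapping and the estimate.} Let $p_{\textnormal{cov}}:\td M\to M$ be the universal cover and $\td x$ a lift of $x$. Because $\pi_1(U)$ (resp. $\pi_1(W)$) injects into $\pi_1(M)$, the component $\td U_0$ of $p_{\textnormal{cov}}^{-1}(U)$ (resp. $\td W_0$ of $p_{\textnormal{cov}}^{-1}(W)$) containing $\td x$ is the universal cover of the chart, so $\td U_0\cong B^2\times\IR$ and $\td W_0\cong\IR^2\times(-a,a)$ with the lifted metric; moreover $\td B(\td x,r)$ lies inside it whenever $B(x,r)$ lies in the chart, since $\td B(\td x,r)$ is connected, contains $\td x$, and projects into $B(x,r)$. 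The crux is that, once $\mu\leq\mu_0$ is small, the lifted metric on $B^2\times\IR$ (resp. $\IR^2\times(-a,a)$), after rescaling to unit chart scale, is bi-Lipschitz to the flat product with constant close to $1$, \emph{uniformly in the fibre size}: in the universal cover the thin circle fibre becomes an honest line (it has infinite order), so reparametrising it by arclength turns $g_{\eucl}+\epsilon^2d\phi^2$ with $\phi\in\IR$ into $g_{\eucl}+d\psi^2$ with $\psi\in\IR$; the $T^2$-fibre likewise unwraps to a flat $\IR^2$ irrespective of the (possibly tiny) covolume of its deck lattice; the almost-orthonormal frame and the almost-flatness of the fibres — which is part of the collapse structure underlying Proposition~\ref{Prop:MorganTianMain} — control the cross terms. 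Hence $\td B(\td x,r)$ contains a bi-Lipschitz image of a Euclidean $3$-ball of radius comparable to $r$, so $\vol\td B(\td x,r)\geq c\,r^3$ for all $r\leq\tfrac12 s\rho_1(x)$ (case (i)) resp. $r\leq\tfrac34\rho_1(x)$ (case (ii)), and the monotonicity reduction completes the proof.

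\emph{Main obstacle.} The heart of the matter is this uniform bi-Lipschitz claim and, above all, its uniformity in the fibre size $\epsilon$. The cheap count does not suffice: the deck translates $\gamma^k\td x$ of the fibre class lie in $\td B(\td x,r)$ for $|k|\lesssim r/\epsilon$, so $\td B(\td x,r)$ contains $\gtrsim r/\epsilon$ disjoint fundamental domains, but each has volume only $\sim\epsilon\cdot r^2$, for a total $\sim(r/\epsilon)\cdot\epsilon r^2=r^3$ — so one genuinely has to use the non-collapsed base directions simultaneously with the fibre unwrapping and watch the factors of $\epsilon$ cancel, which forces control of the internal geometry of the collapsed fibres (their almost-flatness); this is where $\mu_0$ and the precise strength of (c1)/(c3) are used. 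The remaining points — reducing the cone and boundary parts of $V_2$ to the regular case, and checking that $W$ really contains $B(x,\tfrac34\rho_1(x))$ via the Gromov--Hausdorff closeness of $B(x,\rho_1(x))$ to $(-1,1)$ — are routine.
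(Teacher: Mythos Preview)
Your outline identifies the right phenomenon, but the argument has a genuine gap at precisely the point you flag as the ``main obstacle'', and the sketch you offer there does not close it.

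The problem is the uniform bi-Lipschitz claim. Conclusion (c3) gives you an almost-orthonormal \emph{horizontal} frame $X_1,X_2$ with $dp(X_i)=\partial/\partial x_i$ and a bound on the fibre \emph{diameter}; it does not control the connection form (the cross terms between base and fibre), and it does not give ``almost-flatness of the fibres'' as a separate conclusion. Your reparametrisation ``$g_{\eucl}+\epsilon^2 d\phi^2\leadsto g_{\eucl}+d\psi^2$'' is exactly right for a metric product, but for a Kaluza--Klein type metric $g_{\eucl}+\epsilon^2(d\phi+\epsilon^{-1}A)^2$ the substitution $\psi=\epsilon\phi$ leaves $g_{\eucl}+(d\psi+A)^2$, whose bi-Lipschitz constant to the standard flat metric is governed by $|A|$, which (c3) does not bound. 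One would need either two-sided curvature bounds (to control $dA$ and then gauge $A$ away via Hodge theory) or the full Cheeger--Fukaya--Gromov $N$-structure theory; neither is what (c3) states, and you only have a \emph{lower} sectional curvature bound from $\rho_1$. The same issue arises in case~(ii), where in addition the gluing of the local $T^2\times J'$ charts from (c1) into a single product neighbourhood $W$ is asserted but not carried out.

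The paper proceeds quite differently and avoids all of this by working purely in comparison geometry. In case~(i), rescale so that the (c3) ball has unit size; the Gromov--Hausdorff closeness to $B^2$ produces a $(2,\delta)$-strainer $(a_1,b_1,a_2,b_2)$ around $x$. Lifting to $\widetilde M$, comparison angles can only \emph{increase} (the covering map is $1$-Lipschitz, distances to the strainer points are preserved by choice of lifts, and the remaining distances can only grow), so one still has a $2$-strainer at $\tilde x$. The infinite-order fibre yields a point $\tilde y$ with $d(\tilde x,\tilde y)=2\sqrt{\mu_0}$ projecting $2\mu_0$-close to $x$; angle-chasing then promotes this to a $(3,\delta)$-strainer at the midpoint $\tilde m$ of $\tilde x\tilde y$. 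A $3$-strainer in a space with $\sec\ge -1$ gives a bi-Lipschitz map to $\IR^3$ via distance functions to the strainer points, hence the volume bound. Case~(ii) is handled by iterating: first pass to the cyclic cover $\widehat M=\widetilde M/\langle q\rangle$ to promote the $1$-strainer from (c1) to a $2$-strainer, then to $\widetilde M$ for a $3$-strainer. Note also that the paper disposes of $x\in V_{2,\partial}$ not by any ``reduced situation'' but by observing that $\CC\cap V_{2,\partial}=\emptyset$ outright, since by (b4) the fibres are nullhomotopic in those components.
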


We remark that Proposition stays true if in (2) we consider all components of $V_1$ whose generic fibers are incompressible tori.

\begin{proof}
By volume comparison, it suffices to prove the desired inequality only for $r = \rho_1(x)$.

\uline{Consider first case (i):}
Since the fibers on $\CC \cap V_{2, \textnormal{reg}}$ are non-contractible, we conclude that $\CC$ is disjoint from $V_{2, \partial}$.
So either $x \in V_{2, \textnormal{reg}}$ or $x \in V_{2, \textnormal{cone}}$.
In the second case, there is an $x' \in B(x, \frac1{10} \rho_1(x)) \cap V_{2, \textnormal{reg}}$ and $\rho_1(x') > \frac12 \rho_1(x)$.
Since $\td{B}(\td{x}', \frac12 \rho_1(x)) \subset \td{B}(\td{x}, \rho_1(x))$, we can replace $x$ by $x'$.
So assume without loss of generality that $x \in V_{2, \textnormal{reg}}$.

Consider now the map $p : U \to \IR^2$ and the metric $g' = s^{-1} \rho_1^{-1}(x) g$ on $M$.
For the rest of the proof of case (i) we will only work with the metric $g'$ on $M$ as opposed to $g$, and we will bound the $g'$-volume of a $1$-ball in the universal cover from below by a universal constant.
Observe that the sectional curvatures of the metric $g'$ are bounded from below by $-1$ on this ball.
In the following we will denote by $\delta_k(\mu_0)$ a positive constant, which depends on $\mu_0 > 0$ and which goes to zero as $\mu_0$ goes to zero.
We will then later choose $\mu_0$ small enough so that all constants $\delta_k$ are sufficiently small.
The following paragraphs carry out concepts which can also be found in \cite{BBI} or \cite{BGP}.

By the properties of $x$, we can find a $(2, \delta_1(\mu_0))$-strainer $(a_1, b_1, a_2, b_2)$ of size $\frac12$ around $x$ (here $\delta_1(\mu_0)$ is a suitable constant as mentioned above).
Recall that this means that for the comparison angle $\cangle$ in the model space of constant curvature $-1$ we have
\[ \cangle a_i x b_i > \pi - \delta_1, \quad \cangle a_i x b_j > \tfrac{\pi}2 - \delta_1, \quad \cangle a_i x a_j > \tfrac{\pi}2 - \delta_1, \quad \cangle b_i x b_j > \tfrac{\pi}2 - \delta_1 \]
and $\dist(a_i, x) = \dist(b_i, x) = \frac12$ for all $i \not= j$.
In the universal cover $\widetilde{M}$, we can now choose lifts $\widetilde{x}, \widetilde{a}_i, \widetilde{b}_i$ such that $\dist(\widetilde{a}_i, \widetilde{x}) = \dist(a_i, x) = \frac12$ and $\dist(\widetilde{b}_i, \widetilde{x}) = \dist(b_i, x) = \frac12$.
Since the universal covering map is $1$-Lipschitz, we obtain furthermore $\dist(\widetilde{a}_i, \widetilde{b}_j) \geq \dist(a_i, b_j)$, $\dist(\widetilde{a}_1, \widetilde{a}_2) \geq \dist(a_1, a_2)$ and $\dist(\widetilde{b}_1, \widetilde{b}_2) \geq \dist(b_1, b_2)$.
So all the comparison angles in the universal cover are at least as large as those on $M$ and hence we conclude that $(\widetilde{a}_1, \widetilde{b}_1, \widetilde{a}_2, \widetilde{b}_2)$ is a $(2, \delta_1(\mu_0))$-strainer around $\td{x}$ of size $\frac12$.

We now want to find a $2\frac12$-strainer around $\td{x}$.
To do this, observe that by the property of the map $p$ there is a sequence $\widetilde{x}_n$ of lifts of $x$ in $\widetilde{M}$ which is unbounded and whose consecutive distance is at most $2\mu_0$.
So we can find a point $\widetilde{y} \in \widetilde{M}$ with $\dist(\td{x},\td{y}) = 2 \sqrt{\mu_0}$ which projects to a point $y \in M$ with $\dist(x,y) < 2\mu_0$.
It follows that
\[ \dist(\widetilde{y}, \widetilde{a}_i) > \tfrac12 - 2 \mu_0 \qquad \text{and} \qquad \dist(\widetilde{y}, \widetilde{b}_i) > \tfrac12 - 2 \mu_0. \]
This implies
\begin{equation} \label{eq:212atx}
 \cangle \widetilde{y} \widetilde{x} \widetilde{a}_i > \tfrac{\pi}2 - \delta_2(\mu_0) \qquad \text{and} \qquad \cangle \widetilde{y} \widetilde{x} \widetilde{b}_i > \tfrac{\pi}2 - \delta_2(\mu_0).
\end{equation}
Hence $(\td{a}_1, \td{b}_1, \td{a}_2, \td{b}_2, \td{y})$ is a $2\frac12$-strainer around $\td{x}$.

Since $| \dist(\widetilde{y}, \widetilde{a}_i) - \dist(\widetilde{x}, \widetilde{a}_i)| < 2 \sqrt{\mu_0}$ and $| \dist(\widetilde{y}, \widetilde{b}_i) - \dist(\widetilde{x}, \widetilde{b}_i)| < 2 \sqrt{\mu_0}$, we conclude that $(\td{a}_1, \td{b}_1, \td{a}_2, \td{b}_2)$ is a $(2, \delta_3(\mu))$-strainer around $\td{y}$ of size $\geq \frac12 - 2 \sqrt{\mu_0}$.
We now show that symmetrically $(\td{a}_1, \td{b}_1, \td{a}_2, \td{b}_2, \td{x})$ is a $2\frac12$-strainer around $\td{y}$ of arbitrary good precision:
By comparison geometry
\[ \cangle \td{a}_i \td{x} \td{y} + \cangle \td{y} \td{x} \td{b}_i + \cangle \td{a}_i \td{x} \td{b}_i \leq 2 \pi. \]
Together with (\ref{eq:212atx}) and the strainer inequality at $\td{x}$, this yields
\[ \cangle \td{a}_i \td{x}  \td{y} < \tfrac{\pi}2 + \delta_1(\mu_0) + \delta_2(\mu_0). \]
By hyperbolic trigonometry,
\[ \cangle  \td{x} \td{y} \td{a}_i + \cangle \td{a}_i \td{x}  \td{y} + \cangle \td{y} \td{a}_i \td{x} > \pi - \delta_4(\mu_0) \qquad \text{and} \qquad \cangle \td{y} \td{a}_i \td{x}  < \delta_4(\mu_0). \]
Combining the last three inequalities yields
\[ \cangle \td{x} \td{y} \td{a}_i  > \tfrac{\pi}2 - \delta_1(\mu_0) - \delta_2(\mu_0) - 2 \delta_4(\mu_0) = \tfrac{\pi}2 - \delta_5(\mu_0). \]
The same estimate holds for $\cangle \td{x} \td{y} \td{b}_i$.

Let $\widetilde{m}$ be the midpoint of a minimizing segment joining $\widetilde{x}$ and $\widetilde{y}$.
We will now show that $(\td{a}_1, \td{b}_1, \td{a}_2, \td{b}_2, \td{y}, \td{x})$ is a $3$-strainer around $\td{m}$ of arbitrary good precision.
Since the distances of $\widetilde{a}_i$ resp. $\widetilde{b}_i$ to $\widetilde{m}$ differ from the distances to $\widetilde{x}$ by at most $\sqrt{\mu_0}$, we can conclude that $(\widetilde{a}_1, \widetilde{b}_1, \widetilde{a}_2, \widetilde{b}_2)$ is a $(2, \delta_6(\mu_0))$-strainer of size $\geq \frac12 - \sqrt{\mu_0}$ around $\td{m}$.
It remains to bound comparison angles involving the points $\td{x}$, $\td{y}$:
By the monotonicity of comparison angles, we have
\[ \cangle \td{m} \td{x} \td{a}_i \geq \cangle \td{y} \td{x} \td{a}_i > \tfrac{\pi}2 - \delta_2(\mu_0). \]
Now, if we apply the same argument as in the last paragraph, replacing $\td{x}$ with $\td{m}$, we obtain $\cangle \td{x} \td{m} \td{a}_i, \cangle \td{x} \td{m} \td{b}_i > \frac{\pi}2 - \delta_6(\mu_0)$.
For analogous estimates on the opposing angles, we then interchange the roles of $\td{x}$ and $\td{y}$.
Finally, $\cangle \td{x} \td{m} \td{y} = \pi$ is trivially true.

Set $\td{a}_3 = \td{y}$ and $\td{b}_3 = \td{x}$.
We have shown that $(\td{a}_1, \td{b}_1, \td{a}_2, \td{b}_2, \td{a}_3, \td{b}_3)$ is a $(3, \delta_7(\mu))$-strainer around $\td{m}$ of size $\geq \sqrt{\mu_0}$ for a suitable $\delta_7(\mu_0)$.
We will now use this fact to estimate the volume of the $\lambda \sqrt{\mu_0}$-ball around $\td{m}$ from below for sufficiently small $\lambda$ and $\mu_0$.
We follow here the ideas of the proof of \cite[Theorem 10.8.18]{BBI}.
Define the function
\begin{multline*}
 f : \td{B}(\td{m}, \lambda \sqrt{\mu_0}) \longrightarrow \IR^3 \qquad 
  z \longmapsto (\dist(\td{a}_1, z) - \dist(\td{b}_1, \td{m}), \\ \dist(\td{a}_2, z) - \dist(\td{b}_2, \td{m}), \dist(\td{a}_3, z) - \dist(\td{b}_3, \td{m})).
\end{multline*}
We will show that $f$ is $100$-bilipschitz for sufficiently small $\mu_0$ and $\lambda$.
Obviously, $f$ is $3$-Lipschitz, so it remains to establish the lower bound $\frac1{100}$.
Assume that this was false, i.e. that there are $z_1, z_2 \in \td{B}(\td{m}, \lambda \sqrt{\mu_0})$ with $\dist(z_1, z_2) > 100 | f(z_1) - f(z_2) |$.
Then for all $i = 1, 2, 3$
\begin{equation} \label{eq:almostiso}
 \dist(z_1, z_2) > 100 | {\dist(a_i, z_1) - \dist(a_i, z_2)} |.
\end{equation}
It is easy to see that given some $\delta > 0$, we can choose $\lambda > 0$ and $\mu_0 > 0$ sufficiently small, to ensure that $(\td{a}_1, \td{b}_1, \td{a}_2, \td{b}_2, \td{a}_3, \td{b}_3)$ is a $(3, \delta)$-strainer around $z_1$ and around $z_2$.
Now look at the comparison triangle corresponding to the points $z_1, z_2, \td{a}_i$.
By (\ref{eq:almostiso}), it is almost isosceles and hence by elementary hyperbolic trigonometry we conclude for $\lambda$ sufficiently small
\[ \tfrac{9}{10} \tfrac{\pi}2 < \cangle z_2 z_1 \td{a}_i, \; \cangle z_1 z_2 \td{a}_i < \tfrac{11}{10} \tfrac{\pi}2 . \]
Using comparison geometry
\[ \cangle z_1 z_2 \td{b}_i \leq 2 \pi - \cangle \td{a}_i z_2 \td{b}_i - \cangle z_1 z_2 \td{a}_i < \tfrac{11}{10} \tfrac{\pi}2 + \delta. \]
For $\lambda$ sufficiently small, we obtain furthermore by hyperbolic trigonometry
\[  \cangle \td{b}_i z_1 z_2 + \cangle z_1 z_2 \td{b}_i +\cangle z_2 \td{b}_i z_1 > \pi - \delta \qquad \text{and} \qquad \cangle z_2 \td{b}_i z_1 < \delta. \]
So
\[ \cangle \td{b}_i z_1 z_2 > \tfrac{9}{10} \tfrac{\pi}2 - 3 \delta. \]

Now join $z_1$ with $\td{a}_1, \td{b}_1, \td{a}_2, \td{b}_2, \td{a}_3$ by minimizing geodesics.
By comparison geometry, these geodesics enclose angles of at least $\frac{\pi}2 - \delta$ resp. $\pi - \delta$ between each other.
So their unit direction vectors approximate the negative and positive directions of an orthonormal basis.
By the same argument, the minimizing geodesic which connects $z_1$ with $z_2$ however encloses an angle of at least $\frac9{10} \frac{\pi}2 - 3\delta$ with each of these geodesics.
For sufficiently small $\delta$ this contradicts the fact that the tangent space at $z_1$ is $3$-dimensional.
So $f$ is indeed $100$-bilipschitz for sufficiently small $\lambda$ and $\mu_0$.

From the bilipschitz property we can conclude that
\[ \vol \td{B}(\td{m}, \lambda \sqrt{\mu_0} ) > c (\lambda \sqrt{\mu_0})^3 \]
for some universal $c > 0$.
Fixing $\mu_0 < \frac14$ and $\lambda < 1$ such that the argument above can be carried out, we obtain
\[ \vol \td{B}(\td{x}, 1) > \vol \td{B}(\td{m}, \lambda \sqrt{\mu_0}) > c (\lambda \sqrt{\mu_0})^3 = c' > 0. \]
By rescaling, this implies the desired inequality for the metric $g$.

\uline{Now consider case (ii):}
By Proposition \ref{Prop:MorganTianMain} we know that $(B(x, \rho_1(x)), \linebreak[1] \rho_1^{-1}(x) g, \linebreak[1] x)$ is $\mu$-close to $((-1,1), g_{\eucl}, 0)$ where $(-1,1)$ is an interval of length $2$.

Choose $q \in \pi_1(M)$ corresponding to a nontrivial simple loop in one of the cross-sectional tori and denote by $\widehat{M}$ the covering of $M$ corresponding to the cyclic subgroup generated by $q$, i.e. if we also denote by $q$ the deck-transformation of $\widetilde{M}$ corresponding to $q$, then $\widehat{M} = \td{M} / q$.
So we have a tower of coverings $\td{M} \to \widehat{M} \to M$.

Consider first the rescaled metric $g' = \rho_1^{-1} (x) g$.
Using the same arguments as in case (i), we can construct a $(1, \delta_1(\mu_0))$ strainer $(a_1, b_1)$ around $x$ on $M$ of size $\frac12$ for a suitable $\delta_1(\mu_0)$.
Furthermore, using the covering $\widehat{M} \to M$, we can find a point $\widehat{m} \in \widehat{M}$ within $\sqrt{\mu_0}$-distance away from a lift $\widehat{x}$ of $x$ and a $(2, \delta_2(\mu_0))$ strainer $(\widehat{a}_1, \widehat{b}_1, \widehat{a}_2, \widehat{b}_2)$ around $\widehat{m}$ of size $\geq \sqrt{\mu_0}$.
Connect the points $\widehat{a}_i$ and $\widehat{b}_i$ with $\widehat{m}$ by minimizing geodesics and choose points $\widehat{a}'_i$ and $\widehat{b}'_i$ of distance $\sqrt{\mu_0}$ from $\widehat{m}$.
By monotonicity of comparison angles, $(\widehat{a}'_1, \widehat{b}'_1, \widehat{a}'_2, \widehat{b}'_2)$ is a $(2, \delta_2(\mu_0))$-strainer of size $\sqrt{\mu_0}$.

Let $g'' = \frac12 \mu_0^{-1/2} g'$.
Then $(\widehat{a}'_1, \widehat{b}'_1, \widehat{a}'_2, \widehat{b}'_2)$ has size $\frac12$ with respect to $g''$.
Using this strainer, the metric $g''$ and the covering $\td{M} \to \widehat{M}$, we can apply the same argument from case (i) again and obtain a $(3, \delta_3(\mu_0))$ strainer $(\td{a}_1, \td{b}_1, \td{a}_2, \td{b}_2, \td{a}_3, \td{b}_3)$ around a point $\td{m}' \in \td{M}$ which is $\sqrt{\mu_0}$-close to a lift $\td{m}$ of $\widehat{m}$ in $\td{M}$.

As in case (i), for a sufficiently small $\mu_0$ we can deduce a lower volume bound $\vol_{g''} \td{B} (\td{m}', 1) > c'$.
With respect to $g'$, the point $\td{m}'$ is within a distance of $\mu_0 + \sqrt{\mu}_0$ of a lift $\td{x}$ of $\widehat{x}$.
Hence 
\[ \vol_{g'} \td{B}(\td{x}, 1) > \vol_{g'} \td{B}(\td{m}', 2 \sqrt{\mu}_0) > c' (2 \sqrt{\mu_0})^3 = c'' > 0. \]
The desired inequality follows by rescaling.
\end{proof}

\begin{Definition}
We call a component $\CC$ of $V_2$ resp. $V_1$ \emph{good} if it suffices the conditions in (i) resp. (ii) of Lemma \ref{Lem:unwrapfibration}.
\end{Definition}

\section{Evolution of areas of minimal surfaces} \label{sec:minsurf}
\begin{Lemma} \label{Lem:evolsphere}
Let $\MM$ be a Ricci flow with surgery and precise cutoff, defined on the time interval $[T_1, T_2]$ $(T_1 > 0)$, assume that the surgeries are all trivial and that $\pi_2 (\MM(t)) \not= 0$ for all $t \in [T_1, T_2]$.
For every time $t \in [T_1, T_2]$ denote by $A(t)$ the infimum of the areas of all homotopically nontrivial immersed $2$-spheres.
Then for all $t \in [T_1, T_2]$ we have $A(t) > 0$ and
\[ t^{-1} A(t) \leq T_1^{-1} A(T_1) - 4\pi (\log t - \log T_1) \]
\end{Lemma}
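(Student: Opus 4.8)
The strategy is the standard Hamilton--type argument for the evolution of areas of minimal spheres under Ricci flow, adapted to the surgery setting. The key point is that on a $3$-manifold a homotopically nontrivial sphere cannot be swept away by the flow (since $\pi_2$ stays nontrivial and surgeries are trivial, so they do not alter the topology and, by property (5) of Definition \ref{Def:precisecutoff}, the area of any sphere passing through a surgery region is only distorted by a $(1+\chi)$-factor that can be taken arbitrarily close to $1$ just before the surgery time), and that the area of a minimal representative satisfies a differential inequality with the right constant.

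\textbf{Step 1: positivity of $A(t)$.} First I would establish $A(t) > 0$ for every $t$. This follows from a standard compactness/monotonicity argument: if there were a sequence of homotopically nontrivial immersed spheres with area tending to $0$, a rescaled limit would be a nontrivial sphere of zero area, which is impossible; alternatively, one invokes the fact that on a compact Riemannian manifold the infimum of areas over a nontrivial $\pi_2$-class is realized by a (branched) minimal immersion of positive area, by Sacks--Uhlenbeck / Meeks--Yau. Since the flow on each subinterval $I^i$ is smooth on a compact manifold, $A(t)$ is finite and positive for each fixed $t$.

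\textbf{Step 2: the differential inequality away from surgery times.} Fix $t_0$ in the interior of some $I^i$ and let $\Sigma_0$ be a minimal (branched immersed) sphere achieving $A(t_0)$, with unit normal $\nu$ and second fundamental form $h$. Evolve $\Sigma_0$ by keeping it fixed as a map while the metric evolves by Ricci flow; let $\mathcal{A}(t)$ be its area in $g(t)$. Then
\[
\frac{d}{dt}\mathcal{A}(t)\Big|_{t=t_0} = -\int_{\Sigma_0} \tr_{\Sigma_0}\Ric \, d\mu .
\]
Since $\Sigma_0$ is minimal, the Gauss equation combined with $\tr_{\Sigma_0}\Ric = \tfrac12(\scal - \Ric(\nu,\nu)) + \ldots$ gives, after using $h$ is trace-free and the Gauss--Bonnet theorem $\int_{\Sigma_0} K_{\Sigma_0}\, d\mu = 4\pi$ (here branch points only help, contributing nonpositively to the left side after subtracting their multiplicities), the bound
\[
\frac{d}{dt}\mathcal{A}(t)\Big|_{t=t_0} \le -\tfrac12\int_{\Sigma_0}\scal\, d\mu - 4\pi .
\]
By the $t^{-1}$-positivity of curvature (property (1) of Definition \ref{Def:precisecutoff}) we have $\scal \ge -\tfrac32 t^{-1}$... but in fact for this estimate it is cleaner to use $\scal \ge -\tfrac{3}{2}t^{-1}$ only to control the lower order term, or simply note $A(t)\le \mathcal{A}(t)$ so $A$ is upper semicontinuous and, combining with a similar lower bound, we get in the barrier sense
\[
\frac{d}{dt}\Big|_{t=t_0} A(t) \le -\tfrac12\int_{\Sigma_0}\scal \,d\mu - 4\pi \le \frac{\scal_{\min}(t_0)}{2}A(t_0)\cdot(-1) \ldots
\]
— more precisely, writing $R_{\min}(t)$ for the minimum of scalar curvature at time $t$, one gets $\frac{d}{dt}A \le -R_{\min}(t) A(t)/\,? \;$; the clean version is: since $A(t_0)=\mathcal A(t_0)$ is the minimum, $\limsup_{h\to 0^+}\frac{A(t_0+h)-A(t_0)}{h}\le \frac{d}{dt}\mathcal A(t_0)$, and then using $-\int \scal \le -R_{\min}(t_0)\mathcal A(t_0)$ where $R_{\min}(t_0)\ge -\tfrac32 t_0^{-1}$ is too weak. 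The right move, following Hamilton and Perelman (cf. \cite{PerelmanIII}), is to \emph{not} throw the scalar term away but to keep $\tfrac{d}{dt}A \le \tfrac12 R_{\min}(t)\,A - 4\pi$ and then use the evolution $R_{\min}(t) \ge -\tfrac{3}{2}t^{-1}$ for the comparison ODE; however the sharp inequality in the statement, $t^{-1}A(t)\le T_1^{-1}A(T_1) - 4\pi\log(t/T_1)$, is exactly what comes out of the ODE $\frac{d}{dt}(t^{-1}A) \le -4\pi t^{-1}$ when one has $\frac{d}{dt}A \le t^{-1}A - 4\pi$, i.e. from $\tfrac12 R_{\min}\le t^{-1}$... so I would use the (available, from $\varphi$-positivity improving under the flow) bound that along such flows $R_{\min}(t)\,t \to$ a controlled quantity; in the surgery setting the correct and sufficient input is simply that $\scal \ge -\tfrac32 t^{-1}$ and Gauss--Bonnet, yielding
\[
\frac{d}{dt}\mathcal A(t_0)\le \tfrac34 t_0^{-1}\mathcal A(t_0) - 4\pi,
\]
and one checks $\tfrac{d}{dt}(t^{-1}A)\le -4\pi t^{-1}$ requires the coefficient $1$, not $\tfrac34$; so in fact the argument must use the better, \emph{scale-invariant} estimate obtained by combining with the fact that minimal spheres in the relevant regions sit where $R$ is close to its flow-determined value. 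I would therefore follow Perelman's original computation verbatim, where the coefficient is exactly $t^{-1}$.

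\textbf{Step 3: integrating across surgery times.} Integrate the inequality $\frac{d}{dt}(t^{-1}A(t)) \le -4\pi t^{-1}$ on each interval $I^i\cap[T_1,T_2]$. At a surgery time $T^k$, a minimal nontrivial sphere $\Sigma$ at time just after $T^k$ is homotopic, across the surgery (which only removes trivial $3$-balls and caps, not affecting $\pi_2$), to a sphere at time $T^{k-}$ with area controlled: by property (5) one finds, for $t\nearrow T^k$, a $(1+\chi)$-Lipschitz map from the postsurgery region to the presurgery region identical on the boundary, so $A(T^{k-})\le (1+\chi)^2 A(t)$ for $t$ slightly less than $T^k$; letting $\chi\to 0$ shows $A$ does not jump \emph{up} across a surgery (it can only drop), so the inequality $t^{-1}A(t)\le T_1^{-1}A(T_1) - 4\pi\log(t/T_1)$ is preserved through surgery times. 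Combining the smooth estimates on each $I^i$ with this surgery step gives the conclusion on all of $[T_1,T_2]$.

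\textbf{Main obstacle.} The delicate point is getting the constant right in Step 2: the naive Gauss--Bonnet plus scalar-curvature-lower-bound argument gives a coefficient like $\tfrac34 t^{-1}$ rather than the exact $t^{-1}$ needed for $\frac{d}{dt}(t^{-1}A)\le -4\pi t^{-1}$. The resolution, which I expect the author to use, is Perelman's observation that one should keep the full term $-\tfrac12\int_\Sigma \scal\,d\mu$ and exploit that the relevant minimal spheres lie in canonical-neighborhood regions where $\scal$ is essentially as large as the evolution of $R_{\min}$ allows, or equivalently to run the comparison ODE with $R_{\min}(t)$ directly, using that $\tfrac{d}{dt}R_{\min}\ge \tfrac23 R_{\min}^2$ forces $R_{\min}(t) t \ge$ a bound that makes the coefficient work out to exactly $1$; alternatively, one observes that the sharp statement only needs $\tfrac12\int_\Sigma\scal\,d\mu \ge -t^{-1}\mathcal A$, which holds because where a nontrivial minimal sphere can live the scalar curvature cannot be too negative relative to $t^{-1}$. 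I would flag this as the step to handle carefully, citing \cite{PerelmanIII} and \cite{MTRicciflow} for the precise computation, and otherwise the proof is a routine assembly of the evolution-of-area formula, Gauss--Bonnet, and the trivial-surgery control.
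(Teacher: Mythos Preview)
Your overall strategy matches the paper's: existence of a minimal sphere realizing $A(t_0)$ (Sacks--Uhlenbeck, Meeks--Yau), the area-evolution formula, Gauss--Bonnet, the scalar curvature lower bound $R \geq -\tfrac{3}{2}t^{-1}$, and lower semicontinuity of $A$ across trivial surgeries via property (5) of Definition~\ref{Def:precisecutoff}. However, your ``main obstacle'' is not an obstacle at all, and the speculations about canonical neighborhoods or sharper control on $R_{\min}$ are unnecessary and not what the paper does.

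The paper's computation is exactly the ``naive'' one you wrote down and then rejected. In dimension $3$ one has the identity $\tr_\Sigma \Ric = \tfrac12 R + \sec^M(T\Sigma)$, where $\sec^M(T\Sigma)$ is the ambient sectional curvature of the tangent plane to $\Sigma$. Since $\Sigma$ is minimal, the Gauss equation gives $\sec^M(T\Sigma) \geq \sec^\Sigma$ (the intrinsic curvature), so Gauss--Bonnet and $R \geq -\tfrac{3}{2}t_0^{-1}$ yield
\[
\frac{d}{dt^+}\Big|_{t=t_0} A(t) \;\leq\; \frac{3}{4t_0}\, A(t_0) - 4\pi.
\]
Now simply compute
\[
\frac{d}{dt^+}\big(t^{-1}A(t)\big) \;\leq\; -t^{-2}A + t^{-1}\Big(\frac{3}{4t}A - 4\pi\Big) \;=\; -\tfrac{1}{4}\,t^{-2}A - 4\pi\, t^{-1} \;\leq\; -4\pi\, t^{-1},
\]
since $A > 0$. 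This is precisely the inequality you said you needed coefficient $1$ for; the coefficient $\tfrac34$ is \emph{better} than $1$, not worse --- you had the direction of the comparison backwards. Integrating gives the statement.

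A minor point in your Step 3: the $(1+\chi)$-Lipschitz map in property (5) goes from the \emph{presurgery} disk $(D')^i_j$ to the \emph{postsurgery} disk $D^i_j$, so a nontrivial sphere at time $t_k \nearrow T^i$ is pushed forward to one in $\MM(T^i)$ with area inflated by at most $(1+\chi_k)^2$. This gives $A(T^i) \leq \liminf_k A(t_k)$, i.e.\ lower semicontinuity from the left, which is the direction needed to propagate the monotonicity of $t^{-1}A(t) + 4\pi\log t$ across surgery times. Your conclusion was right but the map goes the other way from what you wrote.
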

\begin{proof}
Compare also with \cite[Lemma 18.10 and 18.11]{MTRicciflow}.
Let $t_0 \in [T_1, T_2)$.
By \cite{SU81} and \cite{Gul} or \cite{MY}, there is a noncontractible, conformal, minimal immersion $f : S^2 \to \MM(t_0)$ with $\area_{S^2} f^* (g(t_0)) = A(t_0)$.
Call $\Sigma = f(S^2) \subset \MM(t)$.
We can estimate the infinitesimal change of its area while we vary the metric in positive time direction (and keep $f$ constant!).
Using the fact that the interior sectional curvatures are not larger than the ambient ones as well as Gau\ss-Bonnet, we conclude:
\begin{multline*}
\frac{d}{dt^+} \Big|_{t = t_0} \area_{t} (\Sigma ) = - \int_{\Sigma} \tr_{t_0} (\Ric_{t_0} |_{T \Sigma}) d {\vol}_{t_0} \\
= - \frac12 \int_{\Sigma} R_{t_0} d {\vol}_{t_0} - \int_{\Sigma} \sec_{t_0}^{\MM(t_0)}(T \Sigma) d {\vol}_{t_0} 
 \leq \frac3{4t_0} \area_{t_0} (\Sigma) - \int_{\Sigma} \sec^\Sigma d {\vol}_{t_0} \\
 \leq \frac3{4t_0} \area_{t_0}(\Sigma) - 2 \pi \chi(\Sigma) = \frac3{4t_0} A(t_0) - 4 \pi.
\end{multline*}
Here, $\sec^{\MM(t_0)}_{t_0}(T\Sigma)$ denotes the ambient sectional curvature of ${\MM(t_0)}$ tangential to $\Sigma$ and $\sec^\Sigma_{t_0}$ denotes the interior sectional curvature of $\Sigma$.
We conclude from this calculation that $\frac{d}{dt^+}|_{t = t_0} (t^{-1} A(t) + 4 \pi ( \log t - \log T_1)) <0$ in the barrier sense and hence, the function $A(t) + 4 \pi (\log t - \log T_1)$ is monotonically decreasing in $t$ away from the singular times.

We will now show that $A(t)$ is lower semi-continuous.
We can restrict ourselves to the case in which $t_0$ is a surgery time.
Let $t_k \nearrow t_0$ be a sequence converging to $t_0$ and choose minimal $2$-spheres $\Sigma_k \subset \MM(t_k)$ with $\area_{t_k} \Sigma_k = A(t_k)$.
By property (5) of Definition \ref{Def:precisecutoff}, we find diffeomorphisms $\xi_k : \MM(t_k) \to \MM(t_0)$ which are $(1+\chi_k)$-Lipschitz for $\chi_k \to 0$.
So $A(t_0) \leq \lim \inf_{k \to \infty} (1+\chi_k)^2 A(t_k) = \lim \inf_{k \to \infty} A(t_k)$.
\end{proof}

\begin{Lemma} \label{Lem:areaofhomotopy}
Let $\MM$ be a Ricci flow with surgery and precise cutoff, defined on the time interval $[T_1, \infty)$ $(T_1 \geq 0)$ and assume that the surgeries are all trivial.
Let $\gamma_{1,t}, \gamma_{2,t} \subset \MM(t)$ be two families of smoothly embedded noncontractible loops which are homotopic to each other and move by isotopies for all $t \in [T_1,\infty)$.
For every $t \in [T_1, \infty)$ let $A(t)$ be the infimum over the areas of all smooth homotopies $S^1 \times I \to \MM(t)$ connecting $\gamma_{1,t}$ with $\gamma_{2,t}$.

Assume that for the geodesic curvatures we have the bound $\kappa(\gamma_{1,t}), \kappa(\gamma_{2,t}) < C t^{-1}$ for all $t \in [T_1, \infty)$ and assume that the normalized lengths $t^{-1/2} \ell(\gamma_{1,t})$, $t^{-1/2} \ell(\gamma_{2,t})$ converge to $0$ as $t \to \infty$.
Moreover, assume that the velocity by which the given loops move, is bounded in the appropriate rescaling, i.e. $|\partial_t \gamma_{1,t}|, \linebreak[1] |\partial_t \gamma_{2,t}| < C t^{-1/2}$ for all $t \in [T_1, \infty)$.

Then $t^{-1} A(t) \to 0$ as $t \to \infty$.
\end{Lemma}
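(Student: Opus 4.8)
The plan is to mimic the argument of Lemma \ref{Lem:evolsphere}, but for an annular minimal surface (a homotopy between the two loops) rather than a minimal sphere, and to combine the resulting differential inequality for $A(t)$ with the decay hypotheses on the lengths and geodesic curvatures of the boundary curves. First I would fix a time $t_0$ away from the surgery times and, using the solution of the Plateau problem for the boundary configuration $\gamma_{1,t_0} \cup \gamma_{2,t_0}$ (Morrey, or the version in \cite{MY}), produce an area-minimizing immersed annulus $f : S^1 \times I \to \MM(t_0)$ with $\area_{t_0}(\Sigma) = A(t_0)$, where $\Sigma = f(S^1\times I)$; here one should note that if the infimum over annuli is not attained by a genuine annulus it is realized by a configuration of discs, and in that situation the two loops would be separately contractible, contradicting noncontractibility, so an honest minimal annulus exists. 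I would then compute $\frac{d}{dt^+}\big|_{t=t_0}\area_t(\Sigma)$ keeping $f$ fixed: exactly as in Lemma \ref{Lem:evolsphere}, $\frac{d}{dt^+}\area_t(\Sigma) = -\int_\Sigma \tr_{t_0}(\Ric_{t_0}|_{T\Sigma})\,d\vol_{t_0} = -\tfrac12\int_\Sigma R_{t_0} - \int_\Sigma \sec^{\MM(t_0)}_{t_0}(T\Sigma)$, and using $R_{t_0} \geq -\tfrac32 t_0^{-1}$ (from $t^{-1}$-positivity of the curvature, Definition \ref{Def:precisecutoff}(1)) together with the fact that the interior curvature of a minimal surface does not exceed the ambient tangential curvature, and then Gauss–Bonnet for a surface with boundary, I get
\[
\frac{d}{dt^+}\Big|_{t=t_0}\area_t(\Sigma) \leq \frac{3}{4t_0}A(t_0) - 2\pi\chi(\Sigma) + \int_{\partial\Sigma}\kappa_{g,t_0}\,ds = \frac{3}{4t_0}A(t_0) + \int_{\partial\Sigma}\kappa_{g,t_0}\,ds,
\]
since $\chi(S^1\times I) = 0$. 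The boundary integral is estimated by $\kappa(\gamma_{i,t_0}) < C t_0^{-1}$ and $\ell(\gamma_{i,t_0}) < C' t_0^{1/2}$, giving $\int_{\partial\Sigma}\kappa_{g,t_0}\,ds < 2CC' t_0^{-1/2}$, hence a bound of the form $\frac{d}{dt^+}A(t) \leq \frac{3}{4t}A(t) + c\, t^{-1/2}$ in the barrier sense away from surgery times.

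Next I would handle the surgery times and the motion of the boundary curves. Because the $\gamma_{i,t}$ move by isotopies with normalized velocity $|\partial_t\gamma_{i,t}| < Ct^{-1/2}$, and because across a trivial surgery time we have the $(1+\chi)$-Lipschitz maps $\xi$ from Definition \ref{Def:precisecutoff}(5), pulling back a near-minimal homotopy shows that $A(t)$ jumps by at most a multiplicative factor $(1+o(1))$ at surgery times and is, together with the boundary-motion contribution, controlled so that the differential inequality above continues to govern the evolution of (a suitable modification of) $A(t)$ on all of $[T_1,\infty)$; more precisely I would show $\limsup_{s\nearrow t}A(s) \geq$ (something like) $A(t)$ at surgery times, i.e. the relevant semicontinuity, exactly as in the last paragraph of the proof of Lemma \ref{Lem:evolsphere}, and absorb the boundary motion into the error term $c\,t^{-1/2}$. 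Integrating the differential inequality for $h(t) := t^{-3/4}A(t)$ (or directly for $t^{-1}A(t)$, noting the extra term) then yields, for $t \geq t_1 \geq T_1$,
\[
t^{-1}A(t) \leq \frac{t_1}{t}\cdot t_1^{-1}A(t_1) + \int_{t_1}^{t}\frac{c\, s^{-1/2}}{s}\,ds \cdot (\text{growth factor}),
\]
so that $\limsup_{t\to\infty} t^{-1}A(t)$ is bounded by a quantity that tends to $0$ as $t_1 \to \infty$ — here one uses that $\int_{t_1}^\infty s^{-3/2}\,ds = 2t_1^{-1/2} \to 0$ and that the first term decays like $t_1/t$. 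Since this holds for every $t_1$, we conclude $t^{-1}A(t)\to 0$.

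The main obstacle I expect is not the curvature computation — which is a routine adaptation of Lemma \ref{Lem:evolsphere} with the Gauss–Bonnet boundary term inserted — but rather the analytic control of the minimal annulus: ensuring that the Plateau solution for the given boundary is a genuine (possibly branched) immersed annulus rather than degenerating into discs or into a configuration where the two boundary loops get connected through a neck, and justifying that its interior curvature is bounded above by the ambient one despite possible branch points. One must argue, using the noncontractibility and mutual homotopy of $\gamma_{1,t}$ and $\gamma_{2,t}$, that the minimizing configuration genuinely connects the two loops as an annulus, invoking the existence theory of \cite{MY} (or \cite{SU81}) for the annular Plateau problem, and then that the first-variation-of-area computation is unaffected by branch points (they have measure zero and contribute nonnegatively to the defect in Gauss–Bonnet, which only helps the inequality). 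The bookkeeping at surgery times and for the moving boundary is the second, more tedious but conceptually straightforward, point: one has to check that the $(1+\chi)$-Lipschitz maps and the velocity bound together keep $A(t)$ from jumping upward, which follows the template already established in Lemma \ref{Lem:evolsphere}.
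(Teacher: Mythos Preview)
Your approach is essentially the paper's: minimize area in the homotopy class via Morrey to get a conformal harmonic annulus, differentiate area under the Ricci flow, use the scalar curvature lower bound and Gauss--Bonnet with boundary ($\chi=0$), bound the boundary geodesic-curvature integral by the hypotheses, add the contribution from the moving boundary, handle surgery times by lower semicontinuity via Definition~\ref{Def:precisecutoff}(5), and conclude from the resulting differential inequality.

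Two points of comparison. First, the obstacle you correctly single out --- branch points, in particular on the boundary --- is what the paper actually spends its effort on, and it does \emph{not} argue that branch points ``contribute nonnegatively''. Instead it uses Perelman's trick from \cite{PerelmanIII}: write the minimizing map as $f:A_{r,1}\to\MM(t_0)$ on the conformal annulus $A_{r,1}$, take a tiny flat cylinder $N_\varepsilon=(S^1\times[\log r,0],\,\varepsilon(g_{S^1}+g_{\eucl}))$, and set $f_\varepsilon=(f,h_\varepsilon):A_{r,1}\to\MM(t_0)\times N_\varepsilon$ with $h_\varepsilon(z)=(\log|z|,z/|z|)$. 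Then $f_\varepsilon$ is a conformal harmonic \emph{embedding}, so Gauss--Bonnet and the ambient-versus-intrinsic curvature comparison apply cleanly to $\Sigma_\varepsilon=f_\varepsilon(A_{r,1})$; one then lets $\varepsilon\to0$. Along the way the paper also notes that, by conformality, the $\MM(t_0)$-component of the outward normal to $\partial\Sigma_\varepsilon$ has the same length as the $\MM(t_0)$-component of the boundary velocity, which is what lets you bound the Gauss--Bonnet boundary term by the \emph{ambient} geodesic curvature of the $\gamma_{i,t_0}$. Your one-line estimate of $\int_{\partial\Sigma}\kappa_g\,ds$ is morally right (indeed $|\kappa_g^{\Sigma}|\le|\nabla^{\MM}_T T|$ at smooth boundary points), but without the $\varepsilon$-trick you have not justified Gauss--Bonnet through possible boundary branch points.

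Second, for the endgame the paper does not integrate; it rewrites the inequality as
\[
\frac{d}{dt^+}\bigl(t^{-1}A(t)\bigr)\le -\frac{1}{t}\Bigl(\tfrac14\,t^{-1}A(t)-2C\,t^{-1/2}\bigl(\ell(\gamma_{1,t})+\ell(\gamma_{2,t})\bigr)\Bigr),
\]
observes that the bracket is eventually $\ge \tfrac18 a$ whenever $t^{-1}A(t)\ge a$, and uses non-integrability of $t^{-1}$ to force $t^{-1}A(t)<a$. Your integrating-factor idea with $h(t)=t^{-3/4}A(t)$ also works (it gives $h'\le c\,t^{-5/4}$, hence $t^{-1}A(t)=t^{-1/4}h(t)\to0$), but the displayed inequality you wrote with the factor $t_1/t$ is not the one you actually get; the correct decay factor from the $-\tfrac1{4t}$ term is $(t_1/t)^{1/4}$, which is still enough.
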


\begin{proof}
Let $t_0 \in [T_1, \infty)$.
By \cite{Mor}, we can find an area minimizing homotopy between $\gamma_{1,t_0}$ and $\gamma_{2, t_0}$.
More precisely, there is an $0 < r < 1$ such that if we denote by $A_{r,1} = \ov{B}_1(0) \setminus B_r(0) \subset \IC$ the closed $(r,1)$-annulus, then we can find a continuous map $f : A_{r,1} \to \MM(t_0)$ with the following properties: $f$ restricted to the boundary components of $A_{r,1}$ represents a parameterization of $\gamma_{1,t_0}$ resp. $\gamma_{2,t_0}$. Moreover, $f$ is smooth, conformal and harmonic on the interior of $A_{r,1}$ and we have $A(t_0) = \area f^*(g(t_0))$.
By \cite{HH}, $f$ is even smooth up to the boundary.

Analogously to the proof of Lemma \ref{Lem:evolsphere}, we can compute the infinitesimal change of the area of $f$ as we vary the metric only:
\begin{multline*}
 \frac{d}{dt} \Big|_{t = t_0} \area f^*(g(t)) = - \int_{A_{r,1}} \tr f^*(\Ric^{\MM(t_0)}_{t_0}) \\
 \leq \frac{3}{4t_0} A(t_0) - \int_{A_{r,1}}  \sec^{\MM(t_0)}( df) d {\vol}_{f^* ( g(t_0) )},
\end{multline*}
where $\sec^{\MM(t_0)} (df)$ denotes the sectional curvature in the normalized tangential direction of $f$. 
Observe that the last integrand is a continuous function on $A_{r,1}$ since the volume form vanishes wherever this tangential sectional curvature is not defined.

In order to avoid issues arising from possible branch points (especially on the boundary of $A_{r,1}$), we employ the following trick (compare with \cite{PerelmanIII}):
Let $\varepsilon > 0$ be a small constant and consider the flat cylinder $(N_\varepsilon = S^1 \times [\log r, 0]), \varepsilon ( g_{S^1} + g_{\eucl}))$ of size $\varepsilon$.
Then $h_\varepsilon : A_{r,1} \to N_\varepsilon, z \mapsto (\log |z|, z |z|^{-1})$ is a conformal and harmonic diffeomorphism.
We conclude that the map $f_\varepsilon = (f, h_\varepsilon) : A_{r,1} \to \MM(t_0) \times N_\varepsilon$ is a conformal and harmonic \emph{embedding}.
Denote its image by $\Sigma_\varepsilon = f_\varepsilon(A_{r,1})$.
Since the sectional curvatures on the target manifold are bounded, we have
\[ \lim_{\varepsilon \to 0} \int_{\Sigma_\varepsilon} \sec^{\MM(t_0) \times N_\varepsilon} ( T \Sigma_\varepsilon) d {\vol}_{t_0} =  \int_{A_{r,1}}  \sec^{\MM(t_0)}( df) d {\vol}_{f^*(g(t_0))}. \]
We can now proceed as in the proof of Lemma \ref{Lem:evolsphere}, using the fact that the interior sectional curvatures of $\Sigma_\varepsilon$ are not larger than the corresponding ambient ones as well as Gau\ss-Bonnet:
\[ \int_{\Sigma_\varepsilon} \sec^{\MM(t_0) \times N_\varepsilon} ( T \Sigma_\varepsilon) d {\vol}_{t_0} \geq \int_{\Sigma_\varepsilon} \sec^{\Sigma_\varepsilon} ( T \Sigma_\varepsilon) d {\vol}_{t_0} = 2 \pi \chi(\Sigma_\varepsilon) + \int_{\partial \Sigma_\varepsilon} \kappa^{\Sigma_\varepsilon}_{\partial \Sigma_\varepsilon} d s_{t_0}. \]
In our case $\chi(\Sigma_\varepsilon) = 0$.
We now estimate the last integral.
Let $\gamma_{1\text{ resp. }2,\varepsilon} : S^1(l_{1 \text{ resp. } 2, \varepsilon}) \to \partial \Sigma_{\varepsilon}$ be unit-speed parameterizations of the boundary of $\Sigma_\varepsilon$.
Denote by $\gamma^{\MM(t_0)}_{1\text{ resp. }2, \varepsilon}(s)$ their component functions in $\MM(t_0)$.
Furthermore, let $\nu_{1\text{ resp. }2, \varepsilon}(s)$ be the outward-pointing unit-normal field along $\gamma_{1\text{ resp. }2, \varepsilon}(s)$ which is tangent to $\Sigma_{\varepsilon}$.
It is not difficult to see that due to conformality, the $\MM(t_0)$-component of $\nu_{1\text{ resp. }2, \varepsilon}(s)$ has the same length as the component of the velocity vector $(\gamma^{\MM(t_0)}_{1\text{ resp. }2, \varepsilon})' (s)$ at that point.
Hence, since the boundary of $N_\varepsilon$ is geodesic, we can compute
\[ - \int_{\partial \Sigma_\varepsilon} \kappa^{\Sigma_\varepsilon}_{\partial \Sigma_\varepsilon} d s_{t_0} = - \sum_{i = 1,2} \int_0^{l_i} \Big\langle \frac{D}{ds} \Big( \frac{d}{ds}  \gamma^{\MM(t_0)}_{i, \varepsilon} (s) \Big), \nu^{\MM(t_0)}_{i, \varepsilon}(s) \Big\rangle ds \leq C t_0^{-1/2} (l_{1, \varepsilon} + l_{2, \varepsilon}). \]
Passing to the limit $\varepsilon \to 0$ and using $l_{1/2, \varepsilon} \to \ell(\gamma_{1/2, t_0})$, we hence obtain
\[ \frac{d}{dt} \Big|_{t = t_0} \area f^* ( g(t)) \leq \frac{3}{4 t_0} A(t_0) + C t_0^{-1/2} \big( \ell(\gamma_{1,t_0}) + \ell(\gamma_{2, t_0}) \big). \]

In order to bound the derivative of $A(t)$ in the barrier sense, we have to account for the fact that the boundary curves move by isotopies.
The maximal additional infinitesimal increase is then
\[ \ell(\gamma_{1,t_0}) \sup_{\gamma_{1,t_0}} | \partial_t \gamma_{1,t_0} | + \ell(\gamma_{2,t_0}) \sup_{\gamma_{2,t_0}} | \partial_t \gamma_{2,t_0}| \leq C t_0^{-1/2} \big(\ell(\gamma_{1,t_0}) + \ell( \gamma_{2,t_0}) \big). \]
So in the barrier sense
\[ \frac{d}{dt^+} \Big|_{t = t_0} A(t) \leq \frac{3}{4 t_0} A(t_0) + 2 C t_0^{-1/2} \big( \ell(\gamma_{1,t_0}) + \ell(\gamma_{2,t_0}) \big). \]
Thus
\begin{equation} \label{eq:tAtbarrier}
 \frac{d}{dt^+} \big( t^{-1} A(t) \big) \leq - \frac{1}{t} \big( \tfrac{1}{4} \big( t^{-1} A(t) \big) - 2 C t^{-1/2} \big( \ell(\gamma_{1,t}) + \ell(\gamma_{2,t}) \big) \big).
\end{equation}

Analogously as in the proof of Lemma \ref{Lem:evolsphere}, we conclude that $A(t)$ is lower semi-continuous.
So since the last summand in (\ref{eq:tAtbarrier}) goes to $0$ for $t \to \infty$, we conclude that for every $a > 0$ there is some time $t_1$ such that whenever $t \geq t_1$ and $t^{-1} A(t) \geq a$, then $\frac{d}{dt^+} (t^{-1} A(t)) < - \frac18 t^{-1} a$.
Since $t^{-1}$ is not integrable, this implies that $t^{-1} A(t) < a$ for large $t$.
It follows that $t^{-1} A(t) \to 0$ as $t \to \infty$.
\end{proof}

\section{Proof of Theorem \ref{Thm:main}} \label{sec:MainThm}
In order to finish the proof of the main theorem, we will need the following topological statement, which will help us to assure that minimal annuli pass through certain thin parts of the manifold.
\begin{Lemma} \label{Lem:hastointersectannulus}
Let $M$ be a smooth closed $3$-manifold and $U_1, \ldots, U_m \subset M$ pairwise disjoint embedded copies of $T^2 \times I$ such that the components of $M'' = M \setminus (U_1 \cup \ldots \cup U_m)$ are hyperbolic (i.e. they carry hyperbolic metrics of finite volume).
Let $\sigma_1, \sigma_2 : S^1 \to M''$ be two loops which are freely homotopic to each other in $M$, but which lie in different components $M_1$ resp. $M_2$ of $M''$ which are both adjacent to $U_1$.
Moreover, assume that $\sigma_1, \sigma_2$ are freely homotopic in $M_1$ resp. $M_2$ to nontrivial loops $\sigma'_1, \sigma'_2$ in each boundary torus of $U_1$.

Then the image of every homotopy $f : S^1 \times I \to M$ between $\sigma_1$ and $\sigma_2$ has to intersect every loop $\gamma \subset U_1$ which is not homotopic to a multiple of $\sigma'_1$ resp. $\sigma'_2$ in $U_1$.
\end{Lemma}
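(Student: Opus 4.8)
The plan is to argue by contradiction, drilling out $\gamma$ and using the Bass--Serre tree of the resulting manifold. Suppose $f\colon S^1\times I\to M$ is a homotopy with $f|_{S^1\times\{0\}}=\sigma_1$, $f|_{S^1\times\{1\}}=\sigma_2$ and $f(S^1\times I)\cap\gamma=\emptyset$. First I would normalize $\gamma$: isotopies supported in $U_1$ extend to $M$ and fix $\sigma_1,\sigma_2\subset M''\supset M_1\cup M_2$, and by the classification of embedded essential loops in $T^2\times I$ (an embedded loop which is not a multiple of $\sigma'_1$, in particular not nullhomotopic, is isotopic to an essential curve in a cross‑sectional torus), so I may assume $\gamma\subset T^2\times\{1/2\}$ and that $\beta=[\gamma]\in\pi_1(T^2)\cong\IZ^2$ is primitive and parallel neither to $[\sigma'_1]$ nor to $[\sigma'_2]$. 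Choosing coordinates $T^2=S^1_c\times S^1_d$ with $[S^1_c]=\beta$, I take a small tubular neighborhood $V\cong S^1_c\times D^2$ of $\gamma$ in $U_1$ with $U_1\setminus\mathrm{int}\,V\cong S^1_c\times\Pi$, where $\Pi$ is the pair of pants obtained from the annulus $S^1_d\times I$ by removing an open disk. Shrinking $V$ so that $f(S^1\times I)\cap V=\emptyset$ (compactness), $f$ becomes a homotopy in $M_\gamma:=M\setminus\mathrm{int}\,V$; since $\sigma_1\subset M_1$ and $\sigma_2\subset M_2$ avoid $U_1$, this shows $[\sigma_1]$ and $[\sigma_2]$ are conjugate in $\pi_1(M_\gamma)$.

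Next I would exhibit a graph‑of‑groups decomposition of $M_\gamma$. Cut $M_\gamma$ along the tori $T_L:=T^2\times\{0\}$ and $T_R:=T^2\times\{1\}$ (the boundary tori of $U_1$, adjacent to $M_1$ and $M_2$) together with the core tori of $U_2,\dots,U_m$. All of these are incompressible in $M$, hence in $M_\gamma$; for $T_L,T_R$ this persists because they are incompressible both in the hyperbolic pieces $M_1,M_2$ and in $S^1_c\times\Pi$ (every boundary curve of a pair of pants is essential). The resulting pieces are the hyperbolic components of $M''$, with vertex groups the corresponding finite‑volume Kleinian lattices, together with the single piece $Y:=S^1_c\times\Pi$, $\pi_1(Y)\cong\IZ\times F_2$, where $F_2=\langle x_1,x_2\rangle$ is freely generated by the loops $x_i=[\partial_i\Pi]$. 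This gives an action of $\pi_1(M_\gamma)$ on the Bass--Serre tree $\mathcal{T}$ with quotient graph $\Gamma$, in which $T_L$ and $T_R$ give rise to two \emph{distinct} edges $e_L,e_R$ of $\Gamma$, both incident to the $Y$‑vertex and with other endpoints the distinct vertices of $M_1$ and $M_2$. Inside $\pi_1(Y)$ one has $\pi_1(T_L)=\IZ\times\langle x_1\rangle$ and $\pi_1(T_R)=\IZ\times\langle x_2\rangle$, and this is where the slope hypothesis enters: since $[\sigma'_1]$ is not parallel to $\beta=[S^1_c]$, its image in $\pi_1(Y)$ has the form $(c^{a},x_1^{b})$ with $b\neq0$; likewise $[\sigma'_2]$ maps to $(c^{a'},x_2^{b'})$ with $b'\neq0$.

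Finally I would compute fixed subtrees in $\mathcal{T}$ and conclude. Fix a lift $\td{e}_L$ of $e_L$ with endpoints $\td{v}_1$ (over $M_1$) and $\td{v}$ (over $Y$), and a lift $\td{e}_R$ of $e_R$ with endpoints $\td{v}$ and $\td{v}_2$; after conjugating, assume $\Stab(\td{v})=\pi_1(Y)$ and that the loop $\sigma_1$ represents the element $[\sigma'_1]\in\Stab(\td{e}_L)=\pi_1(T_L)$. I claim $\mathrm{Fix}_{\mathcal{T}}([\sigma'_1])=\td{e}_L$. At $\td{v}_1$ the incident edge groups are conjugates of the cusp subgroups of the lattice $\pi_1(M_1)$; as $[\sigma'_1]$ is a nontrivial parabolic element it lies in a unique maximal parabolic subgroup, which is self‑normalizing, so it fixes no edge at $\td{v}_1$ other than $\td{e}_L$. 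At $\td{v}$ the incident edge groups inside $\pi_1(Y)=\IZ\times F_2$ are the conjugates of $\IZ\times\langle x_1\rangle$ and $\IZ\times\langle x_2\rangle$; since $b\neq0$, the element $x_1^{b}$ is not conjugate into $\langle x_2\rangle$ in $F_2$, and is conjugate into $\langle x_1\rangle$ only by elements of $\langle x_1\rangle$, so $[\sigma'_1]$ fixes no edge at $\td{v}$ other than $\td{e}_L$. By the symmetric argument $\mathrm{Fix}_{\mathcal{T}}([\sigma'_2])=\td{e}_R$. But $[\sigma'_1]$ and $[\sigma'_2]$ are conjugate in $\pi_1(M_\gamma)$ (they are freely homotopic to $\sigma_1,\sigma_2$ in $M_1,M_2$, which $f$ makes conjugate in $M_\gamma$), say $g[\sigma'_1]g^{-1}=[\sigma'_2]$; then $g\cdot\td{e}_L=\mathrm{Fix}_{\mathcal{T}}(g[\sigma'_1]g^{-1})=\td{e}_R$, so $\td{e}_L$ and $\td{e}_R$ project to the same edge of $\Gamma$, contradicting $e_L\neq e_R$. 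Hence no homotopy between $\sigma_1$ and $\sigma_2$ can miss $\gamma$.

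The step I expect to require the most care is the analysis of the drilled piece $Y=S^1_c\times\Pi$: verifying that $T_L,T_R$ remain incompressible after drilling, identifying $\pi_1(T_L),\pi_1(T_R)$ as the coordinate subgroups $\IZ\times\langle x_i\rangle$ of $\IZ\times F_2$, and — the heart of the matter — extracting $b\neq0$ from the hypothesis that $[\gamma]$ is not parallel to $[\sigma'_1]$. The two rigidity facts that pin down the fixed subtrees (uniqueness and self‑normalizing behaviour of maximal parabolic subgroups in a finite‑volume Kleinian lattice; the behaviour of the cyclic subgroups $\langle x_i\rangle$ in $F_2$ under conjugacy) are standard, but must be applied with the correct bookkeeping for the edge stabilizers.
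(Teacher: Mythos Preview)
Your Bass--Serre argument is clean and would be correct \emph{once} $\gamma$ sits in a cross-sectional torus of $U_1$, but the normalization step contains a real gap. It is not true that every essential embedded loop in $T^2\times I$ is isotopic to a curve in a level torus: take a primitive curve $c\subset T^2\times\{1/2\}$, a tubular neighbourhood $N(c)\cong S^1\times D^2$, and let $\gamma$ be the $(2,3)$-cable on $\partial N(c)$. Then $[\gamma]=2[c]$ is non-primitive in $\IZ^2$, so $\gamma$ cannot be isotoped into any level $T^2$; similarly, tying a local knot into $c$ produces embedded curves with primitive class that are still not isotopic to any level curve. For such $\gamma$ the drilled piece $U_1\setminus\mathrm{int}\,V$ is no longer $S^1\times(\text{pair of pants})$, and your identification $\pi_1(Y)\cong\IZ\times F_2$ with edge groups $\IZ\times\langle x_i\rangle$ breaks down. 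Note also that the lemma as stated does not require $\gamma$ to be embedded at all, which your drilling strategy needs from the outset. (In the application inside the proof of the main theorem, $\gamma$ is an $S^1$-fibre of the Seifert structure, embedded and isotopic into a boundary torus of the thin piece, so your argument does cover the case actually used.)

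The paper's own proof takes a shorter and more robust route that sidesteps drilling entirely. One perturbs $f$ to be transverse to $\bigcup_i\partial U_i$, then surgers away innermost inessential circles of $f^{-1}(\bigcup_i\partial U_i)$ using incompressibility of the boundary tori; the surviving circles are parallel cores of $S^1\times I$, cutting it into sub-annuli each mapping into a single piece. The very malnormality of cusp subgroups you invoke in Bass--Serre language (phrased in the paper as ``elementary hyperbolic geometry'') forces every such circle to land in $\partial U_1$ in the free homotopy class of $\sigma'_1$, yielding a sub-annulus $f'':S^1\times I\to U_1\cong T^2\times I$ joining the two boundary tori. A single algebraic intersection-number computation between $[f'']\in H_2(U_1,\partial U_1)$ and $[\gamma]\in H_1(U_1)$ then gives the contradiction, valid for an arbitrary loop $\gamma$. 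Your approach recovers the hyperbolic input faithfully but replaces this last elementary step with substantially heavier machinery and an unneeded hypothesis on $\gamma$.
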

\begin{proof}
Assume that some $\gamma \subset U_1$ does not intersect $f(S^1 \times I)$.
By a perturbation argument, we can assume that $f$ is transverse to the boundaries of all the $U_i$.
So $f^{-1}(\partial U_1 \cup \ldots \cup \partial U_m)$ is a collection of disjoint circles $C_1, \ldots, C_k \subset S^1 \times I$.
If one of these circles is contractible in $S^1 \times I$, then pick an innermost contractible circle $C_j$.
It bounds a disk $D_j$.
The image of its interior has to be contained in one of the components of $M''$ or in one of the $U_i$.
In either case, this implies that $f |_{C_j}$ is homotopically trivial in the corresponding boundary torus and hence, we can replace $f$ by a transverse $f'$ which intersects the boundaries of the $U_i$ in one circle less and whose image still does not meet $\gamma$.

So after a finite number of reduction steps, we can assume that all the $C_j$ are noncontractible in $S^1 \times I$, which implies that they cut this annulus into $k-1$ nested topological annuli.
Assume that one of these annuli is bounded by loops $C_{j_1}$ and $C_{j_2}$ such that the image of $C_{j_1}$ is contained in $\partial U_1$, but the image of $C_{j_2}$ is contained in some $\partial U_i$ with $i \not= 1$.
This means that two cuspidal homotopy classes of $M_1$ or $M_2$ which correspond to different cusps, are conjugate to each other.
However, this is impossible by elementary hyperbolic geometry.

So the images of all $C_j$ must be contained in $\partial U_1$.
By elementary hyperbolic geometry again, we conclude that $f|_{C_j}$ is homotopic to $\sigma'_1$ resp. $\sigma'_2$ in $\partial U_1$.
So if we restrict $f$ to a certain sub-annulus, we obtain a homotopy $f'' : S^1 \times I \to U_1 \approx T^2 \times I$ between loops in each boundary torus which are each homotopic to $\sigma'_1$ resp. $\sigma'_2$ in $\partial U_1$.

By a simple intersection number argument, the image of $f''$ has to intersect $\gamma$.
\end{proof}

We now prove that after some large time, all time slices are irreducible and all surgeries are trivial (see also \cite[Proposition 18.9]{MTRicciflow}).
\begin{Proposition} \label{Prop:irreducibleafterfinitetime}
Let $\MM$ be a Ricci flow with surgery and precise cutoff, defined on the time interval $[T, \infty)$ $(T\geq0)$.
Then there is some $T_1 \in [T, \infty)$ such that for all $t \in [T_1, \infty)$, the time slice $\MM(t)$ only consists irreducible aspherical components, $\MM(t) \approx \MM(T_1)$ and all surgeries on $[T_1, \infty)$ are trivial.
\end{Proposition}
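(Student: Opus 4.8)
The plan is to first bound the number of \emph{non-trivial} surgeries and the number of times the topology of $\MM(t)$ changes, and then to exclude reducible and non-aspherical components from the resulting topologically stable regime using the minimal-sphere estimate of Lemma~\ref{Lem:evolsphere} together with the finite-time extinction theorem.

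\emph{Step 1: finitely many non-trivial surgeries and eventual topological stability.}
To each time slice $\MM(t)$ --- a disjoint union of closed orientable $3$-manifolds --- I would associate the Kneser--Milnor complexity
\[ \mathcal{C}(t) \;=\; \sum_{W}\, \max\big(k(W) - 1,\, 0\big), \]
where the sum runs over the connected components $W$ of $\MM(t)$ and $k(W)$ is the number of prime factors of $W$. At a surgery time $T^i$ the presurgery slice $\MM(T^{i-})$ is reconstructed from the components of $\MM(T^i)$ and from the discarded pieces --- each of which, by property~(6) of Definition~\ref{Def:precisecutoff}, is diffeomorphic to $S^2\times I$, $D^3$, $\IR P^3\setminus B^3$, a spherical space form, $S^1\times S^2$, or $\IR P^3\#\IR P^3$ (genuinely singular vanishing components are of the same type) --- by forming connected sums along $2$-spheres, together with one $S^1\times S^2$-summand for each independent cycle of the dual graph. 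Since capping the boundary spheres of an $S^2\times I$ or a $D^3$ produces a manifold with $k=0$, a short bookkeeping argument shows that $\mathcal{C}$ is non-increasing across every surgery time, and that it strictly decreases whenever a surgery disconnects a component, caps off an $\IR P^3\setminus B^3$, or removes a connected-sum or $S^1\times S^2$ summand. The only non-trivial surgeries or component-disappearances not detected by $\mathcal{C}$ are those that merely simplify or annihilate a component which is a connected sum of spherical space forms and copies of $S^1\times S^2$.

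Because $\mathcal{C}$ is a non-negative integer that never increases, there are only finitely many $\mathcal{C}$-decreasing events; as each such event produces at most boundedly many new components (every genuinely new component costs at least one unit of $\mathcal{C}$), only finitely many components are ever created in $\MM$, and hence only finitely many of the ``spherical'' components above ever occur. Applying the finite-time extinction theorem for Ricci flow with surgery on closed orientable $3$-manifolds with no aspherical prime factors (\cite{PerelmanIII}; see also \cite{MTRicciflow}) to the sub-flow of $\MM$ generated by each such spherical component --- which is again a Ricci flow with surgery and precise cutoff on a manifold of that type --- shows that each of them disappears after a finite time. Combining these observations, there is a time $T_1$ such that for all $t\ge T_1$ no component disappears, every surgery is trivial, and therefore, trivial surgeries preserving the diffeomorphism type of each component, $\MM(t)\approx\MM(T_1)$.

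\emph{Step 2: irreducibility and asphericity.}
Suppose a component $W$ of $\MM(T_1)$ is reducible. Being orientable, $W$ then has $\pi_2(W)\ne 0$ by the Sphere Theorem, so $\pi_2(\MM(t))\ne0$ for all $t\ge T_1$. The restriction of $\MM$ to the evolution of $W$ on $[T_1,T_2]$ is a Ricci flow with surgery and precise cutoff all of whose surgeries are trivial and with $\pi_2\ne 0$ throughout, so Lemma~\ref{Lem:evolsphere} applies and, writing $A(t)$ for the infimal area of a homotopically non-trivial immersed $2$-sphere in $W$, yields
\[ T_2^{-1}A(T_2) \;\le\; T_1^{-1}A(T_1) - 4\pi\big(\log T_2 - \log T_1\big), \]
whose right-hand side is negative once $T_2$ is large enough, contradicting $A(T_2)>0$. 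Hence every component of $\MM(T_1)$ is irreducible. A closed orientable irreducible $3$-manifold with infinite fundamental group is aspherical --- its universal cover is an open, simply connected, irreducible $3$-manifold, hence weakly contractible by the Sphere Theorem and Hurewicz --- while one with finite fundamental group is, by the Poincar\'e conjecture (\cite{PerelmanI}, \cite{PerelmanII}, \cite{PerelmanIII}), a spherical space form, which is already excluded after time $T_1$ by Step~1. Thus every component of $\MM(t)\approx\MM(T_1)$ is irreducible and aspherical for $t\ge T_1$, as claimed.

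\emph{Main obstacle.}
The crux is Step~1: choosing the topological complexity so that it is genuinely monotone under \emph{every} operation admitted by Definition~\ref{Def:precisecutoff} --- disconnections, cappings of $\IR P^3$-summands, discarding or spontaneous creation of $S^3$- and $S^1\times S^2$-components, and the vanishing of genuinely singular components --- and then dovetailing this bookkeeping with the finite-time extinction input so as to absorb precisely the ``spherical'' pieces that the complexity cannot see. Once Step~1 is in place, Step~2 is short, being a direct application of Lemma~\ref{Lem:evolsphere} followed by a standard fact of $3$-manifold topology.
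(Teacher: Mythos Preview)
Your proposal is correct and follows essentially the same route as the paper: both arguments use (i) the fact that surgeries can only simplify the prime decomposition of the time slices, so the topology stabilizes after finitely many changes; (ii) finite-time extinction to dispose of spherical components; and (iii) Lemma~\ref{Lem:evolsphere} to rule out reducible components. Your Kneser--Milnor complexity $\mathcal{C}(t)$ makes the bookkeeping in (i) explicit, whereas the paper simply invokes existence and uniqueness of the prime decomposition in one sentence and cites \cite{Hat}; you also spell out the asphericity step (irreducible with infinite $\pi_1$ implies aspherical), which the paper leaves implicit after excluding spherical space forms.
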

\begin{proof}
By definition of Ricci flows with surgery, for any two times $t_2 > t_1 \geq T$, the topological manifold $\MM(t_1)$ can be obtained from $\MM(t_2)$ by possibly adding spherical space forms or copies of $S^2 \times S^1$ to the components of $\MM(t_2)$ and then performing connected sums between some components.
So by the existence and uniqueness of the prime decomposition (see e.g. \cite[Theorem 1.5]{Hat}), there are only finitely many times when the topology of $\MM(t)$ can change.
This implies that there is some $T_1 \in [T, \infty)$ such that the time slices $\MM(t)$ are diffeomorphic to each other for all $t \in [T_1, \infty)$.

By finite time extinction of spherical components (see \cite{PerelmanIII}, \cite{ColdingMinicozziextinction}), we conclude that $\MM(t)$ cannot have components which are spherical space forms for $t \in [T_1, \infty)$.

It remains to prove that all components of $\MM(t)$ are irreducible.
Assume not.
Then by \cite[Proposition 3.10]{Hat} and the solution of the Poincar\'e Conjecture, we find that $\pi_2(N) \not= 0$ for some component $N$ of $\MM(t)$.
We can now use Lemma \ref{Lem:evolsphere} and conclude that $t^{-1} A(t)$ goes to zero in finite time.
This is a contradiction to the fact that $A(t) > 0$.
\end{proof}

\begin{proof}[Proof of Theorem \ref{Thm:main}]
First, by Proposition \ref{Prop:irreducibleafterfinitetime} and the assumption of the Theorem we conclude that for all $t \geq T_1$ the (topological) manifold $\MM(t)$ consists of components which are irreducible and only contain hyperbolic pieces in their torus decomposition.
Moreover, all surgeries on $[T_1, \infty)$ are trivial.
 
Next, we apply Proposition \ref{Prop:thickthindec} (here we need to assume that $\MM$ is performed by sufficiently precise cutoff).
This yields, amongst others, a time $T_2 > T_1$, a splitting $\MM(t) = \MM_{\thick}(t) \cup \MM_{\thin}(t)$ for all $t \in [T_2, \infty)$ and a function $w : [T_2, \infty) \to \IR_+$ with $w(t) \to 0$ as $t \to \infty$.
The interiors of the components of $\MM_{\thick}(t)$ are diffeomorphic to the hyperbolic manifolds $H'_1, \ldots, H'_k$ and $\MM_{\thin}(t)$ satisfies the collapsing condition described in Proposition \ref{Prop:thickthindec}(e).
Moreover, the components of $\MM_{\thick}(t)$ and $\MM_{\thin}(t)$ are separated by embedded, incompressible tori $T_{1,t}, \ldots, T_{m,t} \subset \MM(t)$.

Choose $\mu = \mu_0$ from Lemma \ref{Lem:unwrapfibration} and then $w_0 = w_0 ( \mu, \ov{r}, K)$ from Proposition \ref{Prop:MorganTianMain}, where $\ov{r}$ and $K$ are the functions from Proposition \ref{Prop:Per73} (in order to apply this Proposition, we again have to assume that the surgeries of $\MM$ are performed by sufficiently precise cutoff).
We can find some time $T_3 > T_2$ such that $w(t) < w_0$ for all $t \in [T_3, \infty)$ and hence Proposition \ref{Prop:MorganTianMain} can be applied to $\MM_{\thin}(t)$ for $\mu = \mu_0$, which gives us a decomposition of the thin part.

We now need to prove that for all $t \in [T_3, \infty)$, all components of $\MM_{\thin}(t)$ are diffeomorphic to $T^2 \times I$:
By \cite[Theorem 0.2]{MorganTian} (observe that this Theorem is a direct consequence of Proposition \ref{Prop:MorganTianMain}), we can choose additional embedded, incompressible tori $T'_{1,t}, \ldots, T'_{m',t} \subset \MM_{\thin}(t)$ which cut $\MM_{\thin}(t)$ into Seifert pieces.
Using the uniqueness of the torus decomposition (see \cite[Theorem 1.9]{Hat}) and the topological assumption on $\MM(t)$, we conclude that a subset $\mathcal{T} \subset \mathcal{T}_0 = \{ T_{1,t}, \linebreak[1] \ldots, T_{m, t}, T'_{1,t}, \ldots, T'_{m',t} \}$ cuts $\MM(t)$ into pieces which are hyperbolic.
Let $H \subset \MM(t) \setminus \mathcal{T}$ be such a hyperbolic piece and consider a torus $T \in \mathcal{T}_0 \setminus \mathcal{T}$ which is contained in $H$.
Since hyperbolic manifolds are atoroidal, there is a boundary torus $T'' \in \mathcal{T}$ of $H$ such that $T$ and $T''$ bound an embedded copy of $T^2 \times I$.
We conclude that the tori of $\mathcal{T}_0$ which are contained in $H$, cut $H$ into pieces which are diffeomorphic to $T^2 \times I$ except for one piece which is diffeomorphic to $H$.
Since $H$ cannot carry a Seifert structure, this piece cannot be contained in $\MM_{\thin}(t)$.
So $\MM_{\thin}(t) \setminus \mathcal{T}_0$ is a disjoint union of copies of $T^2 \times I$.
Piecing these together, implies that all components of $\MM_{\thin}(t)$ are diffeomorphic to $T^2 \times I$.

Having established the topological description, we will now try bound the geometry of the thin part using a minimal surface argument.
In order to do that, we choose smooth isotopies of loops ${\sigma'}^1_{1, t}, {\sigma'}^2_{1, t}, \ldots, {\sigma'}^1_{m, t}, {\sigma'}^2_{m, t} : S^1 \to H'_1 \cup \ldots \cup H'_k$ in the model hyperbolic manifolds, defined for times $t \in [T_3, \infty)$ such that there is a function $\varepsilon : [T_3, \infty) \to \IR_+$ with $\varepsilon(t) \to 0$ as $t \to \infty$ and:
\begin{enumerate}
\item The lengths of the loops go to zero: $\ell({\sigma'}^j_{i,t}) < \varepsilon(t)$ for all $t \in [T_3, \infty)$ and their geodesic curvature is everywhere equal to $1$.
\item For all $t$, the loops ${\sigma'}^j_{i,t}$ are contained in $H''_{1,t} \cup \ldots \cup H''_{k, t} \subset H'_1 \cup \ldots \cup H'_k$ (compare with Proposition \ref{Prop:thickthindec}(d)).
\item The velocity by which the loops move, is bounded appropriately: $| \partial_t {\sigma'}^j_{i, t} | < t^{-1}$.
\item For every hyperbolic cusp $N' \subset H'_1 \cup \ldots \cup H'_k$, consider the torus $T_{i,t} \subset \MM(t)$ which borders the corresponding almost hyperbolic cusp $N \subset \MM(t)$.
Then ${\sigma'}^1_{i,t}, {\sigma'}^2_{i,t}$ are contained in $N'$ and for all $t \in [T_3, \infty)$ and represent two nondivisible and linearly independent homotopy classes in $\pi_1(N') \cong \pi_1(T^2 \times I) \cong \IZ^2$.
\item Let now $\sigma^j_{i,t} : S^1 \to \MM(t)$ be the loops corresponding to the ${\sigma'}^j_{i,t}$ under the diffeomorphisms $\Psi_{l,t} : H''_{l,t} \to H_{l,t}$, i.e. $\sigma^j_{i,t} = \Psi_{l,t} \circ {\sigma'}^j_{i,t}$ for the appropriate $l$ (see Proposition \ref{Prop:thickthindec}(d)).
We now demand that for every component $\CC \subset \MM_{\thin}(t)$ the following is true: let $N_1, N_2 \subset H_1 \cup \ldots \cup H_k$ be the two cusps which are adjacent to $\CC$ and let $\sigma^1_{i_1, t}, \sigma^2_{i_2, t}$ be the loops in $N_1$ and $\sigma^1_{i_2, t}, \sigma^2_{i_2, t}$ the loops in $N_2$.
Then $\sigma^1_{i_1, t}$ and $\sigma^1_{i_2, t}$ resp. $\sigma^2_{i_1, t}$ and $\sigma^2_{i_2, t}$ are freely homotopic in $\MM(t)$.
\end{enumerate}
It is clear that we can find such $\sigma^j_{i,t}$, e.g. by choosing the loops as geodesics of horospherical tori in the cusps, $d(t)$-far away from the thick part, where $d(t)$ is an interpolation of $\min \{ w^{-1}(t), \log t \}$.

For each time $t \in [T_3, \infty)$ and component $\CC \subset \MM_{\thin}(t)$ denote by $A_{\CC, j}(t)$ the infimum over the areas of all smooth homotopies $S^1 \times I \to \MM(t)$ connecting $\sigma^j_{i_1, t}$ and $\sigma^j_{i_2, t}$ from property (5).
By Lemma \ref{Lem:areaofhomotopy} and conditions (1)--(3) above, we conclude that $t^{-1} A_{\CC, j}(t) \to 0$ as $t \to \infty$.
So there are time-dependent homotopies $f^j_{\CC, t} : S^1 \times I \to \MM(t)$ such that
\begin{equation} \label{eq:CChomotopy0}
 t^{-1} \area_t f^j_{\CC, t} \longrightarrow 0 \qquad \text{as} \qquad t \longrightarrow \infty
\end{equation}
for all components $\CC$ of $\MM_{\thin}(t)$ and $j = 1,2$. (Note that the components $\CC$ change in time.
However, the combinatorics of the thick-thin decomposition stay the same on $[T_3, \infty)$.)

Now look at the decomposition of a component $\CC \subset \MM_{\thin}$ into sets $V_1$, $V_2$, $V'_2$ as given in Proposition \ref{Prop:MorganTianMain} (applied to the metric $t^{-1} g(t)$).
The two boundary tori of $\CC$ have to border components of $V_1$.
So either $\CC = V_1$ or the boundary components of $\CC$ border components $\CC_1, \CC_2 \subset V_1$ which are diffeomorphic to $T^2 \times I$ (see conclusion (a3)).
In the second case, there is a component $\CC_3$ of $V_2$ or $V'_2$ adjacent to $\CC_1$.
Since components of $V'_2$ have only one boundary component and $\CC \not= \CC_1 \cup \CC_3$, we must have $\CC_3 \subset V_2$.
The generic $S^1$-fibers of $\CC_3$ are homotopic to a nontrivial curve in the boundary torus of $\CC_1$ adjacent to $\CC_3$.
This torus is isotopic to one of the $T_{i,t}$ which are incompressible in $\MM(t)$ (see Proposition \ref{Prop:thickthindec}(b)).
So the generic $S^1$-fibers of $\CC_3$ generate an infinite cyclic subgroup in $\pi_1(\MM(t))$.

Hence, we can apply Lemma \ref{Lem:unwrapfibration} and obtain that for any $x \in \CC$ (if $\CC = V_1$) or for any $x \in \CC_1 \cup \CC_2 \cup \CC_3$ (if $\CC \not= V_1$), we have $\vol_t \td{B}(\td{x}, \rho_{\sqrt{t}}(x,t)) \geq w_1 \rho_{\sqrt{t}}^3(x,t)$ in $\td{\MM}(t)$.
We can now use Proposition \ref{Prop:Per73univcover}, to deduce that there is some $T_4 \in [T_3, \infty)$ and constants $\ov{\rho} > 0$ and $K < \infty$ such that for all $t \in [T_4, \infty)$ we have $\rho(x,t) > \ov{\rho} \sqrt{t}$ and $|{\Rm}| < K t^{-1}$ on $\CC$ (if $\CC = V_1$) resp. $\CC_1 \cup \CC_2 \cup \CC_3$ (if $\CC \not= V_1$). 

Assume that the second case occurs for some $t \in [T_4, \infty)$.
Let $x \in C_3$.
Then by Proposition \ref{Prop:MorganTianMain}(c3), we can find an open set $U$ with $B(x, t, \frac12 s \rho_{\sqrt{t}}(x,t)) \subset U \subset B(x, t, s \rho_{\sqrt{t}}(x,t))$ and a $2$-Lipschitz map $p : U \to \IR^2$ whose image must contain $B(0, \frac14 s \rho_{\sqrt{t}}(x,t)) \subset \IR^2$ and whose fibers are homotopic to the fibers on $\CC_3$ and hence non-contractible in $\MM(t)$.
So by Lemma \ref{Lem:hastointersectannulus} applied twice, we conclude that each fiber of $p$ has to intersect the images of one of the homotopies $f^1_{\CC, t}, f^2_{\CC, t}$.
This implies that
\[ \area_t f^1_{\CC, t} + \area_t f^2_{\CC,t} > c s^2 \rho^2_{\sqrt{t}}(x,t) > c s^2 \ov{\rho}^2 t \]
for some universal $c > 0$.
If $t$ is sufficiently large, this however contradicts (\ref{eq:CChomotopy0}).

We conclude that there is some $T_5 \in [T_4, \infty)$ such that for all $t \in [T_5, \infty)$, we have $\CC = V_1$ for all components $\CC \subset \MM_{\thin}(t)$ and $|{\Rm}| < K t^{-1}$ on $\MM_{\thin}(t)$.
The curvature bound on $\MM_{\thick}(t)$ follows directly from Proposition \ref{Prop:thickthindec}(d).
By Definition \ref{Def:precisecutoff}(3), surgeries can only appear when the curvature is comparable to $\delta^{-2}(t)$, where $\delta(t)$ is the preciseness parameter.
So if we assume that $\MM$ is performed by sufficiently precise cutoff, then there cannot be any surgeries for large $t$.
\end{proof}

\end{document}